\newcommand{\CC}{\mathbb{C}}
\newcommand{\ZZ}{\mathbb{Z}}
\newcommand{\NN}{\mathbb{N}}
\newcommand{\QQ}{\mathbb{Q}}
\newcommand{\FF}{\mathbb{F}}
\newcommand{\RR}{\mathbb{R}}
\DeclareMathOperator{\Hom}{Hom}
\DeclareMathOperator{\Aut}{Aut}
\DeclareMathOperator{\End}{End}
\DeclareMathOperator{\Gal}{Gal}
\DeclareMathOperator{\ord}{ord}
\DeclareMathOperator{\cl}{cl}
\DeclareMathOperator{\dv}{Div}
\DeclareMathOperator{\pic}{Pic}
\DeclareMathOperator{\Lie}{Lie}
\DeclareMathOperator{\Id}{Id}
\DeclareMathOperator{\Perm}{Perm}
\newcommand{\picc}{\pic^0(C_{f,q})}
\newcommand{\picz}{\pic_B^0(C_{f,q})}
\newcommand{\divz}{\dv_B^0(C_{f,q})}
\newcommand{\R}{\mathfrak{R}}
\renewcommand{\O}{\mathcal{O}}
\newcommand{\dq}{\delta_q}
\newcommand{\rf}{\mathfrak{R}_f}
\newcommand{\ii}{\mathfrak{i}}
\numberwithin{equation}{section}
\newtheorem{lem}{Lemma}[section]
{\theoremstyle{remark} \newtheorem{rem}[lem]{Remark}}
\newtheorem{cor}[lem]{Corollary} 
\newtheorem{prop}[lem]{Proposition} 
\newtheorem{thm}[lem]{Theorem}
\begin{document}
\title[Endomorphism algebra of certain Jacobians]{Endomorphism algebras of Jacobians of certain superelliptic curves}
\author{Jiangwei Xue}
\date{}
\maketitle
{\def\thefootnote{}
\footnotetext{\\[-3ex]Department of Mathematics, Pennsylvania
  State University, \\University Park, PA 16802, USA.\\
  Email: xue\_j@math.psu.edu}}

% \begin{abstract}
%   \cite{zarhin05}, the author showed that under the condition that
%   for $n\geq 5$, $p\nmid n$ or $q=p^r\mid n$ and the galois group of
%   $f(x)$ is either $\mathbf{S}_n$ or $\mathbf{A}_n$, the endormorphism
%   algebra of jacobians asscociated to the supperelliptic curves
%   $y^q=f(x)$ is a product of cyclotomic fields. The same result turns
%   out to hold also in the cases $p\mid n$.
% \end{abstract}

\section{Introduction}
\noindent Let $K$ be a field of characteristic zero, $\bar{K}$ its
algebraic closure. We use $X,Y,Z$ to denote smooth algebraic
varieties over $\bar{K}$.  If $X$ is an abelian variety over $\bar{K}$, we
write $\End(X)$ for its ring of absolute endomorphisms and
$\End^0(X)$ for its endomorphism algebra $\End(X)\otimes \QQ$. Given an
abelian variety $Y$ over $\bar{K}$, we write $\Hom(X,Y)$ for the group of
all $\bar{K}$-homomorphisms from $X$ to $Y$.

Let $p\in \NN$ be a prime, $q=p^r$, $n=mp^s\geq 5$ where $m,r,s\in
\NN$ and $p \nmid m$. Let $f(x)\in K[x]$ be a polynomial of
degree $n$ without multiple roots. We write $\rf=\{\alpha_i\}_{1\leq i
\leq n}$ for the set of roots of $f(x)$ in $\bar{K}$, and $\Gal(f)$ for
the Galois group $\Gal(K(\rf)/K)$. The Galois group may be viewed as a certain
permutation group of $\rf$, i.e., as a subgroup of the group of permutations
$\Perm(\rf)\cong \mathbf{S}_n$.

Let $\zeta_q\in \bar{K}$ be a primitive $q$-th root of unity, and 
\[ \mathcal{P}_q(x)=\frac{x^q-1}{x-1}=x^{q-1}+\dots+1 \in \ZZ[x].\]
Then $\mathcal{P}_q(x)=\prod_{i=1}^{r} \Phi_{p^i}(x)$, where
$\Phi_{p^i}$ denote the $p^i$-th cyclotomic polynomial. Hence
$\QQ[x]/\mathcal{P}_q(x)\QQ[x]$ is isomorphic to
$\prod_{i=1}^{r}\QQ(\zeta_{p^i})$, a direct product of cyclotomic
fields.
 
Let $C_{f,q}$ be a smooth projective model of the affine curve
$y^q=f(x)$.  We denote by $\dq$ the nontrivial periodic automorphism of
$C_{f,q}$:
\[\dq: C_{f,q}\to C_{f,q}, \quad (x,y)\mapsto (x,\zeta_q y).\]
It follows from Albanese functoriality that $\dq$ induces an
automorphism of the Jacobian $J(C_{f,q})$ of $C_{f,q}$, and by an
abuse of notation we also denote the induced automorphism of
$J(C_{f,q})$ by $\dq$. It will be shown (Lemma \ref{lem:min}) that
$\mathcal{P}_q(x)$ is the minimal polynomial of $\dq$ over $\ZZ$ in
$\End(J(C_{f,q}))$.  This gives rise to an embedding
\[\prod_{i=1}^{r}\QQ(\zeta_{p^i}) \cong \QQ[t]/\mathcal{P}_q(t)\QQ[t]
\cong \QQ[\dq] \subseteq \End^0(J(C_{f,q})),\qquad t\mapsto \dq.\] We
prove that if $\Gal(f)$ is large enough, then the embedding above is
actually an isomorphism.
\begin{thm} \label{thm:t1} Let $K$ be a field of
  characteristic zero, $f(x)\in K[x]$ an irreducible polynomial of 
  degree $n\geq 5$. Suppose $\Gal(f)$ is either the
  full symmetric group $\mathbf{S}_n$ or the alternating group
  $\mathbf{A}_n$. Then $\End^{0}(J(C_{f,q}))=\QQ[\dq]=\prod_{i=1}^r
  \QQ(\zeta_{p^i})$.
\end{thm}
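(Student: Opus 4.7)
The plan is to show $E := \End^0(J(C_{f,q}))$ coincides with $F := \QQ[\dq]$ by bounding $E$ from above via the centralizer of the Galois image on an $\ell$-adic Tate module. After replacing $K$ by a finitely generated subfield of definition (which does not change $E$, computed over $\bar K$), fix a prime $\ell\neq p$ and set $V_\ell := T_\ell J(C_{f,q})\otimes_{\ZZ_\ell}\QQ_\ell$. Then $G_K := \Gal(\bar K/K)$ acts continuously on $V_\ell$, commuting with the action of $F_\ell := F\otimes\QQ_\ell$. The chain of inclusions $F_\ell \subseteq E\otimes\QQ_\ell \subseteq \End_{\QQ_\ell[G_K]}(V_\ell)$ reduces the problem to verifying
\[\End_{\QQ_\ell[G_K]}(V_\ell) = F_\ell,\]
after which $E=F$ follows by comparing $\QQ$-dimensions (both equal to $q-1$).

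The idempotent decomposition $F_\ell = \prod_{i=1}^r\prod_{\mathfrak{l}\mid\ell}F_{i,\mathfrak{l}}$ yields a splitting $V_\ell = \bigoplus_{i,\mathfrak{l}}V_{i,\mathfrak{l}}$, and I would first establish that each $V_{i,\mathfrak{l}}$, as an $F_{i,\mathfrak{l}}[\Gal(f)]$-module, is obtained by scalar extension to $F_{i,\mathfrak{l}}$ from the standard $(n-1)$-dimensional representation $V^{\mathrm{std}}$ of $\Gal(f)$. The natural geometric input is the family of divisor classes $D_j$ supported at $(\alpha_j,0)$: each $D_j$ is fixed by $\dq$ (hence annihilated by the isogeny $1-\dq$, so torsion), the collection is permuted by $\Gal(f)$ in its tautological action on $\rf$, and the only relation among them is $\sum_j D_j = 0$ coming from the divisor of $y$. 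Combined with Lemma \ref{lem:min} and the genus of $C_{f,q}$, a dimension count forces $V_\ell$ to have $F_\ell$-rank $n-1$, and this pins down the image of $\Gal(f)$ in $\Aut_{F_\ell}(V_\ell)$ up to $F_\ell$-linear isomorphism.

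With this identification in hand, the conclusion follows from classical representation theory. For $n\geq 5$ the standard representation of $\mathbf{S}_n$, and also of $\mathbf{A}_n$, is absolutely irreducible with $\End_{\Gal(f)}(V^{\mathrm{std}}) = \QQ$, so after scalar extension one has $\End_{F_{i,\mathfrak{l}}[\Gal(f)]}(V_{i,\mathfrak{l}}) = F_{i,\mathfrak{l}}$. Moreover the summands $V_{i,\mathfrak{l}}$ are pairwise non-isomorphic as $G_K$-modules, since they are distinguished by the character through which $\dq$ acts. Hence any $G_K$-endomorphism of $V_\ell$ preserves the decomposition and acts $F_{i,\mathfrak{l}}$-linearly on each summand, yielding $\End_{\QQ_\ell[G_K]}(V_\ell) = \prod_{i,\mathfrak{l}}F_{i,\mathfrak{l}} = F_\ell$ as required.

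The main obstacle is the identification step: transferring the very clean Galois-module structure visible on the $q$-torsion classes $D_j$ to a statement about the $\ell$-adic Tate module for $\ell\neq p$. The practical way is to realize the $F$-isotypic pieces of $J(C_{f,q})$ (up to isogeny) as quotients of simpler objects indexed by $\rf$ on which the standard representation of $\Gal(f)$ acts transparently, or equivalently to show that the Galois image on each $V_{i,\mathfrak{l}}$ is the full $F_{i,\mathfrak{l}}$-linear extension of $\Gal(f)\subseteq\Aut_\QQ(V^{\mathrm{std}})$. This is exactly where the hypothesis $\Gal(f)\in\{\mathbf{S}_n,\mathbf{A}_n\}$ is essential, and where Lemma \ref{lem:min} is needed to fix the correct $F_\ell$-rank of $V_\ell$.
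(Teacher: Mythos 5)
Your strategy has a genuine gap at its core, and it is precisely the step you yourself flag as ``the main obstacle'': the identification of each $V_{i,\mathfrak{l}}$ with a scalar extension of the standard representation of $\Gal(f)$ cannot work, because the action of $G_K$ on the $\ell$-adic Tate module does not factor through the finite quotient $\Gal(f)$. If it did, the image of Galois on $V_\ell$ would be finite and $J(C_{f,q})$ would be of CM type, which is the opposite of what the theorem asserts: $\QQ[\dq]$ has degree $q-1$ over $\QQ$, far smaller than $2\dim J(C_{f,q})=(q-1)(n-1)+1-p^s$. The geometric input you propose also evaporates for $\ell\neq p$: the classes $D_j=\mathcal{P}_{q/p}(\dq)\bigl((P_j)-(\infty)\bigr)$ supported on the branch points satisfy $pD_j=\dv(x-\alpha_j)\sim 0$, so they are $p$-power torsion and contribute nothing to $T_\ell J(C_{f,q})$ when $\ell\neq p$; there is no mechanism to transport their $\Gal(f)$-structure to $V_\ell$. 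Finally, the framing is not actually a reduction: by Faltings the equality $\End_{\QQ_\ell[G_K]}(V_\ell)=F_\ell$ is essentially equivalent to the theorem itself, so everything hinges on a computation of the Galois image on $V_\ell$ for which you offer no viable method.

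The paper works at the prime $p$ instead, where your instinct about the standard representation is correct but lands in a different place. It splits $J(C_{f,q})$ up to isogeny into pieces $J^{(f,p^i)}=\mathcal{P}_{p^{i-1}}(\delta_{p^i})(J(C_{f,p^i}))$ and shows that the $\lambda$-torsion $J^{(f,q)}_\lambda$ (with $\lambda=(1-\zeta_q)$) coincides with the finite group $\picz[p]$ generated by the classes $D_j$; this is where $\Gal(f)$ genuinely acts, through the mod-$p$ permutation module $V_{\rf}$. Mortimer's theorem on the simplicity of the heart of that module gives $\End_{\Gal(f)}(J^{(f,q)}_\lambda)=\FF_p\cdot\Id$, and Zarhin's theorem then bounds the centralizer of $\ii(\QQ(\zeta_q))$ in $\End^0(J^{(f,q)})$ by $\QQ(\zeta_q)$ itself. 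A separate argument, using the multiplicities $n_i$ of the eigenvalues of $\dq^*$ on $\Omega^1$ together with an elementary combinatorial lemma about multiplication on $(\ZZ/q\ZZ)^\times$, rules out a proper subfield as center. To salvage your plan you would have to replace the $\ell$-adic centralizer computation with this mod-$p$ analysis (or something equivalent), since the $p$-torsion supported on the branch locus is the only place where the hypothesis on $\Gal(f)$ can actually be brought to bear.
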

\begin{rem} \label{s1:proof} Notice that the cases $p \nmid n$ or
  $q\mid n$, i.e., $s=0$ or $s\geq r$, respectively, have been covered
  in \cite{zarhin05}. So it remains to show that Theorem \ref{thm:t1}
  holds in the case $0<s<r$. Henceforth we will assume that
  $0<s<r$ throughout the rest of the paper.
\end{rem}
\begin{rem} \label{s1:field} Replacing $K$ by a suitable quadratic
  extension if necessary, we may and will assume that
  $\Gal(f)=\mathbf{A}_n$, which is simple nonabelian since $n\geq
  5$. After that, we also note that $K(\zeta_q)$ is linearly disjoint
  from $K(\rf)$, because $K(\zeta_q)/K$ is abelian while the Galois
  group of $K(\rf)/K$ is simple nonabelian. It follows that the Galois
  group $\Gal(f)$ remains $\mathbf{A}_n$ if the field $K$ is replaced
  with $K(\zeta_q)$. So we further assume that $\zeta_q \in K$ for
  the rest of this paper.
\end{rem}
This paper is organized as follows. In Section 2 we establish some
preliminary results about the curve $C_{f,q}$ and its Jacobian
$J(C_{f,q})$. In Section 3 we introduce the abelian subvariety 
$J^{(f,q)}$ and show inductively that $J(C_{f,q})$ is isogenous to a
product of abelian subvarieties $J^{(f,p^i)}$. In Section 4 we show
that $J^{(f,p^i)}$ is absolutely simple with endomorphism algebra
$\QQ(\zeta_{p^i})$, which will enable us to prove the main Theorem.

\textbf{Acknowledgments:} the author would like to express his
gratitude to Yuri G. Zarhin and W. Dale Brownawell, who read 
the draft versions of this paper and provided valuable comments and
suggestions.

\section{Preliminaries}
We keep all the notation and assumptions of the previous section. In
particular, $\deg(f)=n=mp^s$, where $p\nmid m$ and $0<s<r$. It is
known (cf. \cite[page 3358]{Towse}) that on the curve $C_{f,q}$ there
are exactly $p^s$ \emph{points at infinity}, which we denote by
$\infty_j$ with $1\leq j \leq p^s$. We also write $S_{\infty}$ for the
set of points at infinity on $C_{f,q}$.
\begin{lem}\label{lem:inf}
  {\rm (i)} The curve $C_{f,q}$ has genus
  $g(C_{f,q})=\big((q-1)(n-1)+1-p^s\big)/2$. \\
  {\rm (ii)} The automorphism $\delta_q$ acts transitively on $S_\infty$,
  and the stabilizer of each $\infty_j$ is the cyclic group $\langle
  \delta_q^{p^s} \rangle$ generated by $\delta_q^{p^s}$.
\end{lem}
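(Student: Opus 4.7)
The plan is to view the $x$-coordinate map $\pi\colon C_{f,q}\to\PP^1$, $(x,y)\mapsto x$, as a cyclic Galois cover of degree $q$ with Galois group $\langle\dq\rangle$. Since $\zeta_q\in K$ by our standing assumption, $K(C_{f,q})/K(x)$ is a Kummer extension of the form $K(x)(f(x)^{1/q})/K(x)$, cyclic of degree $q$, and $\dq$ is a generator of its Galois group. With this interpretation, both parts of the lemma reduce to a ramification analysis of $\pi$.

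First I would determine the ramification. Above each root $\alpha_i$ of $f$ there is exactly one point, namely $(\alpha_i,0)$; since $f$ has simple roots, $f(x)$ is a local uniformizer at $\alpha_i$ on $\PP^1$, and $y^q=f(x)$ then shows that $y$ is a uniformizer at $(\alpha_i,0)$ with ramification index $q$. Above $\infty\in\PP^1$ there are by hypothesis $p^s$ points; since $\pi$ is Galois, all ramification indices above $\infty$ are equal, so each must equal $e = q/p^s = p^{r-s}$.

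For part (ii), the Galois group of any Galois cover acts transitively on fibers, so $\dq$ acts transitively on $S_\infty$. Orbit-stabilizer then forces the stabilizer of each $\infty_j$ to have order $p^{r-s}$; since $\langle\dq\rangle\cong\ZZ/p^r\ZZ$ is a cyclic $p$-group with a unique subgroup of each order, this stabilizer must be exactly $\langle\dq^{p^s}\rangle$.

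For part (i), I apply Riemann--Hurwitz to $\pi$. The sum of $(e_P-1)$ over ramified points equals
\[ n(q-1) + p^s(p^{r-s}-1) = n(q-1) + q - p^s, \]
so $2g(C_{f,q}) - 2 = -2q + n(q-1) + q - p^s$, which rearranges to the claimed formula $g(C_{f,q}) = \bigl((q-1)(n-1) + 1 - p^s\bigr)/2$. The only step requiring real attention is the equality of the ramification indices above infinity; this is automatic from the Galois hypothesis once the fiber size $p^s$ is known, so beyond this minor point I do not anticipate a serious obstacle.
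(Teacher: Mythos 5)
Your proof is correct, but it takes a genuinely different route from the paper. The paper does not re-derive the genus: it cites the formula from Koo and Towse, and for part (ii) it writes down an explicit birational change of coordinates $x=u^{-a}v^{-p^{r-s}}$, $y=u^bv^{-m}$ (with $am+bp^{r-s}=1$) that places the points at infinity at $(u,v)=(\zeta_{p^s}^j,0)$ and exhibits $\dq$ in these coordinates as $(u,v)\mapsto(\zeta_{p^s}u,\zeta_q^{-a}v)$, from which transitivity and the stabilizer are read off directly. You instead treat $\pi\colon C_{f,q}\to\PP^1$ as a cyclic Kummer cover and use Riemann--Hurwitz plus transitivity of the Galois group on fibers and orbit--stabilizer. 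Your argument is sound and more conceptual: totally ramified fibers over the $n$ simple roots, equal ramification indices $p^{r-s}$ over $\infty$ by the Galois property, and the unique-subgroup-of-each-order property of the cyclic $p$-group $\langle\dq\rangle$ pins down the stabilizer. Two small points you should make explicit: (a) the input $\lvert S_\infty\rvert=p^s$ is legitimate to assume, as the paper states it (citing Towse) just before the lemma, but it is doing real work in both parts of your argument; (b) one should note that $y^q-f(x)$ is irreducible over $\bar K(x)$ (which holds because $f$ has a simple root, so $f$ is not an $\ell$-th power for any prime $\ell\mid q$, nor of the form $-4h^4$ when $p=2$), so that the cover genuinely has degree $q$ and Galois group $\langle\dq\rangle\cong\ZZ/q\ZZ$. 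What the paper's coordinate computation buys, and your approach does not, is the explicit local data at the $\infty_j$ (orders of $x$ and $y$ there), which is reused later in the proof of Lemma \ref{lem:picz2} to compute $\dv(x-\alpha_i)$ and $\dv(y^a(x-\alpha_i)^b)$; if one adopted your proof, that divisor computation would still need the ramification data you derived, so nothing is lost, but the explicit model is a convenient byproduct of the paper's method.
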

\begin{proof}
  The formula for $g(C_{f,q})$ is a special case of \cite[equation
  (4)]{koo}(See also \cite[Proposition 1]{Towse}). The assertion about
  the action of $\delta_q$ on $S_\infty$ also follows from an explicit
  description of $S_\infty$ given in \cite{Towse}, which we summarize
  here. Recall that $\R_f=\{\alpha_i\}_{1\leq i \leq n}$ is the set of
  roots of $f(x)$. So over the algebraic closure $\bar{K}$, $C_{f,q}$ is
  given by the equation $y^q=c_n\prod_{i=1}^n(x-\alpha_i)$, where
  $c_n\in K$ is the leading coefficient of $f(x)$.   Let $a,b$ be
  a pair of integers such that $am+bp^{r-s}=1$ and $0<a<p^{r-s}$. Let
  us consider the following birational transformation:
  \begin{equation}
    \label{eq:birational}
 x=u^{-a}v^{-p^{r-s}}, \quad y=u^bv^{-m}.     
  \end{equation}
  Then the defining equation for the curve $C_{f,q}$ over $\bar{K}$
  changes into
\[u^{p^s}=c_n\prod_{i=1}^{n}\left(1-\alpha_iu^av^{p^{r-s}}\right).\]
It is easy to show that in $(u,v)$-coordinates 
\[ S_\infty=\{ (\zeta_{p^s}^j, 0)\mid 1\leq j \leq p^s \}\] where
$\zeta_{p^s}=\zeta_q^{p^{r-s}}$ is a primitive $p^s$-th root of
unity. We set $ \infty_j=(\zeta_{p^s}^j,0)$.  To study the action of
$\delta_q$ on $S_\infty$, we first find that the inverse to the
birational transformation (\ref{eq:birational}) is
\[ u=x^{-m}y^{p^{r-s}}, \quad v=x^{-b}y^{-a}.\]
So in $(u,v)$-coordinates, $\delta_q$ is given by 
\[ \delta_q: C_{f,q}\to C_{f,q}, \quad (u,v)\mapsto
(\zeta_{q}^{p^{r-s}}u, \zeta_q^{-a}v)=(\zeta_{p^s}u,\zeta_q^{-a}v).\]
Now assertion (ii) follows easily. 
\end{proof}

Given a smooth algebraic variety $X$ over $\bar{K}$, we write
$\Omega^1(X)$ for the $\bar{K}$-vector space of the differentials of the
first kind on $X$. By functoriality, $\dq$ induces a periodic
$\bar{K}$-linear automorphism $ \dq^*: \Omega^1(C_{f,q}) \rightarrow
\Omega^1(C_{f,q})$. Clearly, $\dq^*$ is semi-simple therefore
$\Omega^1(C_{f,q})$ splits into a direct sum of eigenspaces
$\Omega^1(C_{f,q})_i$, where
\[\Omega^1(C_{f,q})_i:=\{ \omega\in \Omega^1(C_{f,q})\mid
\dq^*(\omega)=\zeta_q^{-i}\omega \}\] is the eigenspace corresponding
to eigenvalue $\zeta_q^{-i}$.  We use $n_i$ to denote the dimension of
the eigenspace $\Omega^1(C_{f,q})_i$. 

If $z\in \RR$ is a real number, then we write $[z]_S$ for the greatest
integer that is {\em strictly} less than $z$. More explicitly, if
$z\not\in \ZZ$, then $[z]_S$ coincides with the floor function $[z]$;
if $z\in \ZZ$, then $[z]_S=z-1$.
\begin{prop}\label{prop:diff}
{\rm (i)} The set 
\[ \left\{ \omega_{i,j}=x^{j-1}dx/y^{q-i} \mid 0<i, 0<j, ni+qj<nq\right\}\] is a
basis of $\Omega^1(C_{f,q})$. \\
{\rm(ii)} $n_i=[ni/q]_S=[mi/p^{r-s}]_S$.

\end{prop}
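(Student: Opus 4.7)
The plan is to compute the divisor of each candidate differential $\omega_{i,j}$ on $C_{f,q}$, show that the three inequalities $i\ge1$, $j\ge1$, $ni+qj<nq$ are precisely those forcing $\omega_{i,j}$ to be holomorphic, and then verify that the number of admissible pairs equals $g(C_{f,q})$. Part (ii) will follow by decomposing the basis into eigenspaces of $\dq^*$.

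The first step is the local analysis of $(x)$, $(y)$, and $(dx)$. Away from the ramification locus and $S_\infty$, the function $x$ is a local uniformizer, so $\omega_{i,j}$ is automatically regular there. Above any root $\alpha$ of $f$, separability forces $y$ to be a local uniformizer at the unique point $P$ above $\alpha$, with $v_P(y)=1$, $v_P(x-\alpha)=q$, and $v_P(dx)=q-1$ (using $\fchar K=0$ so that $d(y^q)=qy^{q-1}dy$ is nonzero at $P$). At $\infty_k=(\zeta_{p^s}^k,0)$ the coordinate $v$ from Lemma \ref{lem:inf} is a local uniformizer; differentiating the defining equation $u^{p^s}=c_n\prod_i(1-\alpha_iu^av^{p^{r-s}})$ implicitly yields $v_{\infty_k}(du)\ge p^{r-s}-1$, and the formulas $x=u^{-a}v^{-p^{r-s}}$, $y=u^bv^{-m}$ give $v_{\infty_k}(x)=-p^{r-s}$, $v_{\infty_k}(y)=-m$. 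In the expression of $dx$ as an $\O$-linear combination of $du$ and $dv$, the $dv$-term has valuation $-p^{r-s}-1$ and the $du$-term has valuation at least $-1$, so the former dominates and $v_{\infty_k}(dx)=-p^{r-s}-1$. As a sanity check, $\deg(dx)=n(q-1)-p^s(p^{r-s}+1)=nq-n-q-p^s=2g-2$ matches Lemma \ref{lem:inf}(i).

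Assembling these gives $v_P(\omega_{i,j})=i-1$ at each ramification point and $v_{\infty_k}(\omega_{i,j})=(q-i)m-jp^{r-s}-1$ at each infinite point; nonnegativity at the former forces $i\ge1$, while at the latter it forces $(q-i)m>jp^{r-s}$, equivalent (after multiplication by $p^s$ and using $mp^s=n$, $p^{r-s}p^s=q$) to $ni+qj<nq$. Linear independence of the $\omega_{i,j}$ is immediate, since $\omega_{i,j}=x^{j-1}y^i\,dx/f(x)$ and the monomials $x^{j-1}y^i$ with $1\le i\le q-1$ and $j\ge1$ are $\bar K$-linearly independent in $\bar K(C_{f,q})=\bar K(x)[y]/(y^q-f(x))$. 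For the count, summing $[n(q-i)/q]_S$ over $i\in\{1,\dots,q-1\}$ and invoking the classical identity $\sum_{i=1}^{q-1}\lfloor ni/q\rfloor=((n-1)(q-1)+\gcd(n,q)-1)/2$ with $\gcd(n,q)=p^s$, adjusted by $-(p^s-1)$ for the $p^s-1$ indices $i\in\{p^{r-s},2p^{r-s},\dots,(p^s-1)p^{r-s}\}$ on which $q\mid ni$, yields exactly $g=((q-1)(n-1)+1-p^s)/2$. This establishes (i).

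For (ii), a direct pullback computation gives $\dq^*\omega_{i,j}=\zeta_q^i\omega_{i,j}$, placing $\omega_{i,j}$ in the $\zeta_q^{-(q-i)}$-eigenspace $\Omega^1(C_{f,q})_{q-i}$. Hence $\Omega^1(C_{f,q})_i$ is spanned by $\{\omega_{q-i,j}:j\ge1,\ qj<ni\}$, which has cardinality $[ni/q]_S=[mi/p^{r-s}]_S$ since $n/q=m/p^{r-s}$. The main technical obstacle is the infinity valuation computation in the first step, specifically pinning down $v_{\infty_k}(dx)=-p^{r-s}-1$ rigorously from the implicit relation between $u$ and $v$; once that is in hand, the rest is careful bookkeeping with the integer counting identity.
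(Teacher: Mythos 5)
Your proposal is correct, and for part (i) it takes a genuinely more self-contained route than the paper: the paper simply cites Towse's Proposition 2 for the basis (only remarking that the holomorphy conditions there do not require $n>q$), whereas you reprove it from scratch by computing $v_P(\omega_{i,j})=i-1$ at the totally ramified points over $\rf$ and $v_{\infty_k}(\omega_{i,j})=(q-i)m-jp^{r-s}-1$ at the points at infinity via the $(u,v)$-coordinates of Lemma \ref{lem:inf}, then matching the count of admissible pairs against $g(C_{f,q})$ using $\sum_{i=1}^{q-1}\lfloor ni/q\rfloor=\big((n-1)(q-1)+\gcd(n,q)-1\big)/2$ together with the correction $-(p^s-1)$ for the indices with $q\mid ni$. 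I checked the key valuations: $v_{\infty_k}(x)=-p^{r-s}$, $v_{\infty_k}(y)=-m$, and $v_{\infty_k}(dx)=-p^{r-s}-1$ (the $dv$-term dominates since the implicit differentiation gives $v_{\infty_k}(du)\geq p^{r-s}-1$), and your degree check $\deg(dx)=nq-n-q-p^s=2g-2$ confirms them; the final count does come out to $\big((q-1)(n-1)+1-p^s\big)/2$. Your argument buys independence from the external reference at the cost of the local analysis at infinity, which is exactly the content of Towse's computation. For part (ii) your argument is essentially the paper's: each $\omega_{i,j}$ is a $\dq^*$-eigenvector, so $\Omega^1(C_{f,q})_i$ is spanned by the $\omega_{q-i,j}$ and $n_i$ is the number of $j\geq 1$ with $qj<ni$, namely $[ni/q]_S$. (Your sign bookkeeping, placing $\omega_{i,j}$ in the eigenspace for $\zeta_q^{-(q-i)}=\zeta_q^{i}$, is the internally consistent one; the paper's phrasing of the eigenvalue as $\zeta_q^{-i}$ while still using $\omega_{q-i,j}$ as the basis of $\Omega^1(C_{f,q})_i$ is a minor slip that does not affect the formula for $n_i$.)
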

\begin{proof}
  Part (i) of the proposition is a restatement of \cite[Proposition
  2]{Towse} where we note that on page 3360 of \cite{Towse}, the
  conditions on $(i,j)$ for the differential form $x^idx/y^j$ to be
  holomorphic hold without the assumption $n=\deg f > q$. For part
  (ii), each $\omega_{i,j}=x^{j-1} dx/y^{q-i} $ is an eigenvector of
  $\dq^*$ with eigenvalue $\zeta_q^{-i}$. For a fixed $0<i<q$, the set of
  differential forms $\omega_{q-i,j}\in \Omega^1(C_{f,q})$ forms a
  basis of $\Omega^1(C_{f,q})_i$. The number of $j$'s that satisfy
  $j>0$ and $n(q-i)+qj<nq$ is exactly $[ni/q]_S$. 
\end{proof}

\begin{rem}\label{rem:gcd}
  Let $d(n,q)$ be the greatest common divisor of the integers in the
  set
\[\left\{n_i=\left[\frac{mi}{p^{r-s}}\right]_S=\left[\frac{mi}{p^{r-s}}\right] \Big\vert\: 0<i<q, p\nmid i \right\}.\]
If we write $m=kp^{r-s}+c$, with $0<c<p^{r-s}$, then
  \begin{align*}
    n_1&=
    \left[\frac{kp^{r-s}+c}{p^{r-s}}\right]=k, \qquad\\
    n_{p^{r-s}+1}&=
    \left[\frac{(kp^{r-s}+c)(p^{r-s}+1)}{p^{r-s}}\right]=kp^{r-s}+k+c,
    \qquad \\
    n_{p^{r-s}-1}&=
    \left[\frac{(kp^{r-s}+c)(p^{r-s}-1)}{p^{r-s}}\right]=kp^{r-s}-k+c-1.
    \qquad 
  \end{align*}
It follows that $d(n,q)=1$ when $0<s<r$.
\end{rem}

\begin{rem}
  Similar to the situation with $C_{f,q}$, the automorphism $\dq$ of
  $J(C_{f,q})$ induces a $\bar{K}$-linear automorphism of
  $\Omega^1(J(C_{f,q}))$ which we again denote by $\dq^*$. If $D$ is a
  divisor of degree $0$ on $C_{f,q}$, we write $\cl(D)$ for the linear
  equivalence class of $D$. Let $P_i$ be the $\dq$-invariant point on
  $C_{f,q}(\bar{K})$ with $(x,y)$-coordinates $(\alpha_i,0)$ where
  $\alpha_i\in \R_f$.  For a fixed $i$, the map
\[ \tau: C_{f,q} \to J(C_{f,q}), \qquad P \mapsto \cl((P)-(P_i)) \] is
an embedding of algebraic varieties over $K$, and the induced map
\[\tau^*: \Omega^1(J(C_{f,q}))\to \Omega^1(C_{f,q})\] is an
isomorphism which commutes with $\dq^*$. This allows us to identify
$\Omega^1(J(C_{f,q}))$ with $\Omega^1(C_{f,q})$ via $\tau^*$. In
particular, the dimension of the eigenspace corresponding to the
eigenvalue $\zeta_q^{-i}$ for the automorphism
$\dq^*:\Omega^1(J(C_{f,q}))\to \Omega^1(J(C_{f,q}))$ is exactly
$n_i=[ni/q]_S$ as given by part (ii) of Proposition \ref{prop:diff}. 
\end{rem}

\begin{rem} \label{rm:midq} Clearly, $\zeta_q^{-i}$ is an eigenvalue
  of $\dq^*$ if and only if $n_i>0$. By part (ii) of Proposition
  \ref{prop:diff}, $\zeta_q^{-i}$ is an eigenvalue of $\dq^*$ for all
  $p^r-p^{r-1}\leq i \leq p^r-1\:$(recall that $n=mp^s \geq 5$). Also
  note that $1$ is not an eigenvalue. Taking into account that
  $\dq^q=1$, one sees that
  \[ \mathcal{P}_q(x)=\frac{x^q-1}{x-1}=x^{q-1}+\cdots+x+1\] is the
  minimal polynomial over $\QQ$ of $\dq^*$ on $\Omega^1(J(C_{f,q}))$.
\end{rem}

\begin{lem}\label{lem:min}
  The minimal polynomial over $\QQ$ of $\dq$ in $\End^0(J(C_{f,q}))$
  is $\mathcal{P}_q(t)$. We have natural isomorphisms 
  \begin{gather*}
\prod_{i=1}^{r}\QQ(\zeta_{p^i})\cong \QQ[t]/\mathcal{P}_q(t)\QQ[t]\cong   \QQ[\dq]
  \subseteq \End^0(J(C_{f,q})),\qquad t\mapsto \dq,\\
\ZZ[t]/\mathcal{P}_q(t)\ZZ[t]\cong\ZZ[\dq]
\subseteq \End(J(C_{f,q})),\qquad t \mapsto \dq.
  \end{gather*}
\end{lem}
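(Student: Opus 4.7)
The plan is to establish the vanishing $\mathcal{P}_q(\dq)=0$ in $\End(J(C_{f,q}))$ directly by a geometric argument on the curve, and then combine it with Remark \ref{rm:midq} to conclude that the minimal polynomial is exactly $\mathcal{P}_q(t)$ rather than a proper divisor.

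For the vanishing, fix a root $\alpha_i\in\rf$, take the $\dq$-fixed point $P_i=(\alpha_i,0)$, and use the $\dq$-equivariant Abel-Jacobi embedding $\tau:C_{f,q}\to J(C_{f,q})$, $P\mapsto\cl((P)-(P_i))$, recorded in the excerpt. For any $P\in C_{f,q}(\bar{K})$,
\[
\mathcal{P}_q(\dq)\cdot\tau(P) = \cl\Big(\sum_{j=0}^{q-1}\dq^j(P) - q(P_i)\Big).
\]
The projection $\pi:C_{f,q}\to\PP^1$, $(x,y)\mapsto x$, is a degree-$q$ cover whose generic fibers are the $\dq$-orbits, and it is totally ramified at $P_i$. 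Hence $\sum_{j=0}^{q-1}\dq^j(P)=\pi^*(\pi(P))$ and $q(P_i)=\pi^*(\alpha_i)$, so the bracketed divisor equals $\pi^*((\pi(P))-(\alpha_i))$, which is the pullback of a principal divisor on $\PP^1$ and therefore principal on $C_{f,q}$. Thus $\mathcal{P}_q(\dq)$ kills every $\tau(P)$, and since $\tau(C_{f,q})$ generates $J(C_{f,q})$ as a group, $\mathcal{P}_q(\dq)=0$ in $\End(J(C_{f,q}))$.

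Consequently the minimal polynomial of $\dq$ in $\End^0(J(C_{f,q}))$ divides $\mathcal{P}_q(t)$. Conversely, by functoriality any polynomial annihilating $\dq$ in $\End^0(J(C_{f,q}))$ must annihilate $\dq^*$ on $\Omega^1(J(C_{f,q}))$, so by Remark \ref{rm:midq} it is a multiple of $\mathcal{P}_q(t)$. The two divisibilities force equality, giving $\QQ[\dq]\cong\QQ[t]/\mathcal{P}_q(t)\QQ[t]$; the Chinese Remainder Theorem applied to $\mathcal{P}_q(t)=\prod_{i=1}^{r}\Phi_{p^i}(t)$ then yields the product decomposition into cyclotomic fields. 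For the integral statement, the kernel $I$ of the evaluation $\ZZ[t]\to\End(J(C_{f,q}))$, $t\mapsto\dq$, contains the monic $\mathcal{P}_q(t)$ and satisfies $I\otimes\QQ=\mathcal{P}_q(t)\QQ[t]$; Euclidean division by the monic $\mathcal{P}_q$ inside $\ZZ[t]$ then forces $I=\mathcal{P}_q(t)\ZZ[t]$, and $\ZZ[\dq]\cong\ZZ[t]/\mathcal{P}_q(t)\ZZ[t]$ follows.

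The main obstacle is the first step: identifying $\sum_{j=0}^{q-1}\dq^j(P)$ with a pullback from the quotient $\PP^1$ so as to realize $\mathcal{P}_q(\dq)\tau(P)$ as the class of a principal divisor. Once this geometric identity is in place, combining it with the eigenspace information from Remark \ref{rm:midq} yields the minimal polynomial essentially for free, and both isomorphism statements are formal consequences.
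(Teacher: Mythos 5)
Your proof is correct and follows essentially the same route as the paper, which simply defers to Zarhin's Lemma 4.8: the annihilation of $J(C_{f,q})$ by $\mathcal{P}_q(\dq)$ via the degree-$q$ quotient $C_{f,q}\to\PP^1$, combined with the eigenvalue information on $\Omega^1(J(C_{f,q}))$ from Remark \ref{rm:midq} to rule out proper divisors, and then the standard rational/integral bookkeeping. Your choice of the $\dq$-fixed affine point $P_i$ as Abel--Jacobi basepoint neatly absorbs the only new feature of this setting (the $p^s$ points at infinity permuted transitively by $\dq$), which is precisely the issue the paper flags as causing ``no extra hindrance.''
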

\begin{proof}
  When $p\nmid \deg f$, the result was obtained in \cite[Lemma
  4.8]{zarhin05}. In our case here, instead of having one \emph{point
    at infinity} on curve $C_{f,q}$ that is fixed by $\dq$ as
  \cite[Lemma 4.8]{zarhin05} dealt with, we have $p^s$ \emph{points at
    infinity} on which $\dq$ acts transitively (Lemma
  \ref{lem:inf}). But it turns out that this cause no extra hindrance,
  and the proof of \cite[Lemma 4.8]{zarhin05} can be carried out here
  in exactly the same way.
\end{proof}

We denote by $B$ the set of all $\dq$-invariant points on the affine
curve $y^q=f(x)$. More explicitly, we set $ B:=\{P_i=(\alpha_i, 0) \mid \alpha_i \in \R_f\}.$

\begin{lem}\label{lem:l1}
  Let $D=\sum_{P \in B}a_P(P)$ be a divisor on $C_{f,q}$ of degree $0$
  that is supported on $B$. Then $D$ is principal if and only if $a_P
  \equiv a_Q \mod q$ for any two points $P,Q\in B$.
\end{lem}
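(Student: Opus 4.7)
The plan is to construct explicit principal divisors using the rational functions $y$ and $x-\alpha_i$, which together generate every principal divisor supported on $B\cup S_\infty$. First I would compute the divisors of these functions on $C_{f,q}$. The projection $(x,y)\mapsto x$ is totally ramified at each $P_i$ with index $q$, so $y$ is a local parameter there and $v_{P_i}(x-\alpha_i)=q$. Using the birational chart $(u,v)$ introduced in the proof of Lemma \ref{lem:inf}, in which $v$ is a local parameter at every $\infty_j$, a short computation gives $v_{\infty_j}(y)=-m$ and $v_{\infty_j}(x)=-p^{r-s}$. Combining these local data with $\deg\dv(y)=\deg\dv(x-\alpha_i)=0$ and the transitivity of $\dq$ on $S_\infty$ yields
\[ \dv(y)=\sum_{i=1}^n(P_i)-m\sum_{j=1}^{p^s}(\infty_j), \qquad \dv(x-\alpha_i)=q(P_i)-p^{r-s}\sum_{j=1}^{p^s}(\infty_j). \]

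For the ``if'' direction, suppose $a_P\equiv c\pmod q$ for all $P\in B$ and write $a_{P_i}=c+qb_i$ with $b_i\in\ZZ$. The degree-zero hypothesis $\sum_i a_{P_i}=0$ becomes $nc+q\sum_i b_i=0$; since $n=mp^s$ and $\gcd(m,p)=1$, this forces $p^{r-s}\mid c$ and $\sum_i b_i=-mc/p^{r-s}$. I would then verify that $g:=y^c\prod_i(x-\alpha_i)^{b_i}$ satisfies $\dv(g)=D$: the coefficient of each $(\infty_j)$ in $\dv(g)$ equals $-(cm+p^{r-s}\sum_i b_i)=0$, and the coefficient of $(P_i)$ is $c+qb_i=a_{P_i}$.

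For the converse, let $g\in\bar{K}(C_{f,q})^{\times}$ satisfy $\dv(g)=D$. Since $\dq$ fixes $B$ pointwise, $\dv(\dq^*g)=D=\dv(g)$, so $\dq^*g/g$ is a nonzero constant $\lambda$; as $\dq$ has order $q$, $\lambda^q=1$, hence $\lambda=\zeta_q^k$ for some $k\in\ZZ$. Because $\dq^*y=\zeta_q y$, the ratio $h:=g/y^k$ is $\dq$-invariant and therefore lies in the fixed subfield of $\bar{K}(C_{f,q})$ under $\langle\dq\rangle$, which is $\bar{K}(x)$. The support of $\dv(h)$ must lie in $B\cup S_\infty$, but any factor $(x-\beta)$ of $h$ with $\beta\not\in\rf$ would contribute zeros or poles at the $q$ distinct unramified preimages of $\beta$, none of which lie in $B\cup S_\infty$. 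Therefore $h=\mu\prod_i(x-\alpha_i)^{e_i}$ for some $\mu\in\bar{K}^{\times}$ and integers $e_i$. Substituting into $\dv(g)=D$ and using the divisor formulas above yields $a_{P_i}=k+qe_i$ for every $i$, so all $a_{P_i}$ are congruent modulo $q$.

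The only delicate input is the local analysis at infinity that produces $v_{\infty_j}(y)=-m$ and $v_{\infty_j}(x)=-p^{r-s}$; everything else is bookkeeping in the divisor group once those formulas are in place.
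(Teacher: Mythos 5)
Your proposal is correct and follows essentially the same route as the paper's (the paper merely cites Zarhin's Lemma 4.7 and asserts that "a similar proof applies"; that proof is exactly this computation of $\dv(y)$ and $\dv(x-\alpha_i)$ combined with the fact that the fixed field of $\langle\dq\rangle$ is $\bar{K}(x)$). Your local valuations at infinity, $v_{\infty_j}(y)=-m$ and $v_{\infty_j}(x)=-p^{r-s}$, agree with the divisor identities the paper uses later in the proof of Lemma \ref{lem:picz2}, so the adaptation to the case of $p^s$ points at infinity is handled correctly.
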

\begin{proof}
  Again, this is a generalized version of \cite[Lemma 4.7]{zarhin05},
  and a similar proof applies.
\end{proof}

\section{Cyclic covers and Jacobians}
Recall that we assumed that $K$ contains the $q$-th roots of unity. Given
a curve $C_{f,q}$ and its Jacobian $J(C_{f,q})$ as in previous sections,
we consider the abelian subvariety
\[ J^{(f,q)}:=\mathcal{P}_{q/p}(\dq)(J(C_{f,q})) \subset J(C_{f,q}).\]
Clearly, $J^{(f,q)}$ is a $\dq$-invariant abelian subvariety which is
defined over $K$. In addition, $\Phi_q(\dq)(J^{(f,q)})=0$, where
$\Phi_q(x)$ denotes the $q$-th cyclotomic polynomial.  Hence we have
the following embeddings
\[\ii: \O=\ZZ[\zeta_q] \hookrightarrow \End(J^{(f,q)}) \qquad E=\QQ(\zeta_q) \hookrightarrow \End^0(J^{(f,q)}) \qquad
\zeta_q \mapsto \dq \vert_{J^{(f,q)}}.\]

\begin{rem}\label{rem:31}
  If $q=p$, then $\mathcal{P}_{q/p}(x)=\mathcal{P}_1(x)=1$ and
  therefore $J^{(f,p)}=J(C_{f,q})$. 
\end{rem}

Let $\End_K(J^{(f,q)})$ be the ring of all $K$-endomorphisms of
$J^{(f,q)}$. By Remark \ref{s1:field},
$\ii(\O)\subseteq \End_K(J^{(f,q)})$.  Let $\lambda=(1-\zeta_q)\O$,
and note that $\lambda$ is the only prime ideal in $\O$ that divides
$p\O$.  We write $J^{(f,q)}_\lambda$ for the group of $\lambda$-torsions
of the abelian variety $J^{(f,q)}$:
\[ J^{(f,q)}_\lambda:=\{ x \in J^{(f,q)} \mid \ii(c)x=0, \forall c \in
\lambda\}.\] Given $z\in J^{(f,q)}$, $\ii(1-\zeta_q)(z)=0$ if and only
if $z$ is fixed by $\dq\vert_{J^{(f,q)}}$.  It follows that $
J^{(f,q)}_\lambda=\big(J^{(f,q)}\big)^{\dq}$, the subgroup of
$J^{(f,q)}$ fixed by $\dq$. Clearly $J^{(f,q)}_\lambda$ is a vector
space over the finite field $k(\lambda):=\O/\lambda\cong\FF_p$. It is
also a $\Gal(\bar{K}/K)$ sub-module of $J^{(f,q)}[p]$, where we denote by
$A[p]$ the subgroup of the abelian group $A$ generated by
elements of order $p$. We would like to
understand the structure of $J^{(f,q)}_\lambda$ as a Galois module.

Recall that $\rf=\{\alpha_i\}_{i=1}^n$ is the set of roots of $f(x)$.  
Let us define
\begin{gather*}
  \FF_p^{\rf}:=\{\phi: \rf \rightarrow
\FF_p\},\\
(\FF_p^{\rf})^0:=\{\phi\in
\FF_p^{\rf}\mid \sum_{i=1}^{n}\phi(\alpha_i)=0\}.
\end{gather*}
The space $\FF_p^{\rf}$ is naturally equipped with a structure of
$\Gal(f)$-module in which both $(\FF_p^{\rf})^0$ and the space of
constant functions $\FF_p\cdot \mathbf{1}$ are submodules. Since
$p\mid n$, the space $\FF_p\cdot \mathbf{1}$ is a $\Gal(f)$-submodule
of $(\FF_p^{\rf})^0$. We write $V_{\rf}$ for the quotient
$\Gal(f)$-module $\FF_p^{\rf}/(\FF_p\cdot \mathbf{1})$, and
$(\FF_P^{\rf})^{00}$ for the image of $(\FF_p^{\rf})^0$ under the
projection map $\FF_p^{\rf}\to V_{\rf}$. Clearly $(\FF_p^{\rf})^{00}$ is
a codimension one submodule of $V_{\rf}$.

Let $\divz$ be the group of divisors of degree $0$ of $C_{f,q}$
supported on the set of points $B=\{(\alpha_i,0)\}_{i=1}^n$. Clearly
$\divz$ is a $\Gal(\bar{K}/K)$-submodule of $\dv^0(C_{f,q})$ for which the
Galois action factors through $\Gal(f)$.  As an abelian group, $\divz$
is free of rank $n-1$. Let $\picz$ be the image of $\divz$ under the
canonical map of $\Gal(\bar{K}/K)$-modules $\dv^0(C_{f,q})\to \picc$. Then
$\picz$ inherits the structure of a $\Gal(f)$-module from $\divz$.  As
before, we write $\picz[p]$ for the subgroup of $\picz$ generated by
elements of order $p$. It is readily seen to be a $\Gal(f)$-submodule
of $\picz$.

\begin{lem} \label{lem:picz2} The $\Gal(f)$-module $\picz[p]$ is
  canonically isomorphic to $V_{\rf}$. So $\dim_{\FF_p}
  \picz[p]=n-1$. Moreover, $\picz[p]$ is a $\Gal(\bar{K}/K)$-submodule of
  $J^{(f,q)}_\lambda$.
\end{lem}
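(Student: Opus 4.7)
The plan is to treat the two assertions of the lemma separately, both via direct calculations with divisors supported on $B$, using Lemma~\ref{lem:l1} as the central tool.

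For the canonical isomorphism $\picz[p]\cong V_{\rf}$, I define a map $\psi$ from the preimage of $\picz[p]$ in $\divz$ to $V_{\rf}$ as follows. By Lemma~\ref{lem:l1}, a divisor $D=\sum_i a_i(P_i)\in\divz$ satisfies $pD\sim 0$ iff $a_i\equiv a_j\pmod{p^{r-1}}$, so I may write $a_i=a+p^{r-1}c_i$ for integers $a,c_i$ and set $\psi(D)=(c_i\bmod p)_i$. The pair $(a,c_i)$ is determined only up to the simultaneous shift $(a,c_i)\sim(a+p^{r-1},c_i-1)$, which adds the same constant to every $c_i$, so $\psi(D)$ is well-defined in $V_{\rf}=\FF_p^{\rf}/\FF_p\cdot\mathbf{1}$. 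A second application of Lemma~\ref{lem:l1} identifies $\ker\psi$ with the principal divisors in $\divz$, giving an injection $\picz[p]\hookrightarrow V_{\rf}$. For surjectivity, given $\phi\in\FF_p^{\rf}$ lift to integers $c_i$; the formula $a=-p^{r-1}\sum c_i/n$ is integral iff $m\mid\sum c_i$, a congruence that can always be arranged by shifting individual $c_i$'s by multiples of $p$ (without altering $\phi$), using $\gcd(p,m)=1$. Equivariance under $\Gal(f)$ is automatic since both sides carry the natural permutation action on $\rf$. This yields $\picz[p]\cong V_{\rf}$ and $\dim_{\FF_p}\picz[p]=n-1$.

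For the inclusion $\picz[p]\subseteq J^{(f,q)}_\lambda$, first observe that $\dq$ acts as the identity on $\picz$ (since every point of $B$ is $\dq$-fixed), so it suffices to show $\picz[p]\subseteq J^{(f,q)}$. I would use the intermediate degree-$p$ cover $\pi\colon C_{f,q}\to C_{f,q/p}$, $(x,y)\mapsto(x,y^p)$, together with its pushforward $\pi_*\colon J(C_{f,q})\to J(C_{f,q/p})$. On divisors supported on $B$, $\pi_*$ sends $\sum a_i(P_i)$ to the analogous sum on $C_{f,q/p}$, and the congruence $a_i\equiv a_j\pmod{q/p}$ characterising $\picz[p]$ makes the image principal by Lemma~\ref{lem:l1} applied to $C_{f,q/p}$; thus $\picz[p]\subseteq\ker\pi_*$. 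Meanwhile, writing $\delta_{q/p}$ for the analogous automorphism on $C_{f,q/p}$, the identity $\pi_*\dq=\delta_{q/p}\pi_*$ gives $\pi_*\circ\mathcal{P}_{q/p}(\dq)=\mathcal{P}_{q/p}(\delta_{q/p})\circ\pi_*=0$ by Lemma~\ref{lem:min} applied to $q/p$, so $J^{(f,q)}\subseteq\ker\pi_*$ as well. The two abelian subvarieties have the same dimension by Riemann--Hurwitz, and since $\pi$ is totally ramified at every $P_i$ the Prym $\ker\pi_*$ is connected, giving $\ker\pi_*=J^{(f,q)}$; combined with $\dq$-invariance this places $\picz[p]$ inside $J^{(f,q)}_\lambda$.

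The subtlest step is the final equality $\ker\pi_*=J^{(f,q)}$ as group schemes (not merely as abelian subvarieties up to a component group). This rests on the connectedness of the Prym of a ramified cyclic cover of prime degree, which can be obtained by showing $\pi^*\colon\pic^0(C_{f,q/p})\to\picc$ is injective --- itself reducing, via Lemma~\ref{lem:l1}, to the same kind of congruence analysis used for the first assertion. Alternatively, once $\picz[p]\subseteq\ker\pi_*$ and $\dim J^{(f,q)}=\dim\ker\pi_*$ are both in hand, one can close by cardinality: $|\picz[p]|=p^{n-1}$ matches $\dim_{\FF_p}J^{(f,q)}[\lambda]=n-1$ (coming from the $\O$-module rank $n-1$ of $J^{(f,q)}$), so once the inclusion $\picz[p]\subseteq J^{(f,q)}$ is known the two subgroups coincide and the second half of the lemma follows.
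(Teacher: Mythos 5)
Your first half is correct and is essentially the paper's isomorphism run in the opposite direction: the paper sends $\phi_i\in\FF_p^{\rf}$ to an explicit divisor class $D_i$ and checks injectivity with Lemma~\ref{lem:l1}, then gets surjectivity from $\dim_{\FF_p}\picz[p]\le n-1$ (because $\divz$ has rank $n-1$); you instead read off the coefficients $a_i=a+p^{r-1}c_i$ of a divisor with $pD\sim 0$ and map to $(c_i\bmod p)\in V_{\rf}$, proving bijectivity directly. Both rest on the same congruence criterion, and your surjectivity argument (adjusting $\sum c_i$ modulo $m$ by $p$-shifts) is sound.

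The genuine gap is in the second half. The paper gets the inclusion $\picz[p]\subseteq J^{(f,q)}_\lambda$ for free: its generators are \emph{constructed} as $D_i=\mathcal{P}_{q/p}(\dq)\bigl(\cl((P_i)-(\infty))\bigr)$, hence lie in $J^{(f,q)}=\mathcal{P}_{q/p}(\dq)(J(C_{f,q}))$ by definition, and are $\dq$-fixed because their support is $\dq$-invariant. (The point at infinity is essential here: applying $\mathcal{P}_{q/p}(\dq)$ to classes $(P_i)-(P_j)$ only produces the codimension-one piece $(\FF_p^{\rf})^{00}$ of $V_{\rf}$.) Because your construction of the isomorphism never exhibits $\picz[p]$ inside the image of $\mathcal{P}_{q/p}(\dq)$, you are forced into the Prym argument, and as written it is not closed. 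The identity $\ker\pi_*=J^{(f,q)}$ needs (a) $\dim J^{(f,q)}=\sum_{p\nmid i}n_i=g(C_{f,q})-g(C_{f,q/p})$, which is true but requires computing the rank of $\mathcal{P}_{q/p}(\dq^*)$ on $\Omega^1$ rather than just citing Riemann--Hurwitz (and you may not use Proposition~\ref{prop:dim}, whose proof depends on this lemma); and (b) connectedness of $\ker\pi_*$. For (b), your claim that injectivity of $\pi^*$ ``reduces via Lemma~\ref{lem:l1} to the same congruence analysis'' does not work: $\ker\pi^*$ consists of $p$-torsion classes on $C_{f,q/p}$ with no reason to be supported on the branch points, so Lemma~\ref{lem:l1} says nothing about them; one must instead invoke the standard fact that $\ker\pi^*$ classifies unramified cyclic subcovers and vanishes because $\pi$ is totally ramified at each $P_i$. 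Finally, your ``alternative'' closing by cardinality is circular: it both uses $\dim J^{(f,q)}$ from Proposition~\ref{prop:dim} and explicitly presupposes the inclusion $\picz[p]\subseteq J^{(f,q)}$ that it is meant to establish. The Prym route can be completed, but the missing idea that makes the paper's proof one line is to realize the generators of $\picz[p]$ as $\mathcal{P}_{q/p}(\dq)$-images of classes $(P_i)-(\infty)$, using $\dv(y^a(x-\alpha_i)^b)$ to rewrite them as divisors supported on $B$.
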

\begin{proof}
  By Lemma \ref{lem:inf}, the automorphism $\dq$ acts transitively on
  the set $S_\infty$ of \emph{points at infinity} of curve $C_{f,q}$,
  and $\dq^{p^s}$ fixes $\infty_j$ for all $\infty_j\in S_\infty$. Further,
  the set $B$ consists all $\delta_q$ invariant points on
  $C_{f,q}$. Pick a point $\infty\in S_\infty$ and
  consider the following divisor classes for $1\leq i \leq n$, 
\[D_i=P_{q/p}((P_i)-(\infty))=
p^{r-1}(P_i)-p^{r-s-1}\sum_{j=1}^{p^s} (\infty_j).\]
Note that 
$pD_i=p^{r}(P_i)-p^{r-s}\sum_{j=1}^{p^s} (\infty_j)=
\dv(x-\alpha_i)=0.$

Recall that in the proof of Lemma \ref{lem:inf} we find a pair of
integers $(a,b)$ such that $a\,m+bp^{r-s}=1$.  Then
\[\begin{split}
  \dv(y^a(x-\alpha_i)^b)&= a\sum_{k=1}^n
(P_k)+bp^r(P_i)-(a\,m+bp^{r-s})\sum_{j=1}^{p^s}(\infty_j)\\
&=a\sum_{k=1}^n(P_k)+bp^r(P_i)-\sum_{j=1}^{p^s}(\infty_j).
\end{split}\]

It follows that the divisor $\sum_{j=1}^{p^s}(\infty_j)$ is linearly equivalent to
$a\sum_{k=1}^n(P_k)+bp^r(P_i)$. Therefore, the divisor class
\[\begin{split}
D_i&=p^{r-1}(P_i)-p^{r-s-1}\sum_{j=1}^{p^s} (\infty_j)\\  
 &= p^{r-1}(P_i)-p^{r-s-1}\left(a(\sum_{k=1}^n(P_k))+bp^r(P_i)\right)\\
 &=-ap^{r-s-1}\sum_{k=1}^n(P_k)+p^{r-1}(1-bp^{r-s})(P_i)\\
 &=a\,mp^{r-1}(P_i)-ap^{r-s-1}\sum_{k=1}^n(P_k).
\end{split}
\]
In particular, $D_i$ is supported on $B$, so $D_i \in \picz[p]$. Let
$\phi_i$ be the function on $\rf$ given by
$\phi_i(\alpha_j)=\delta_{ij}$, where $\delta_{ij}=1$ if $i=j$, and
$0$ otherwise.  The $\phi_i$'s form a basis of $\FF_p^{\rf}$. This
allows us to define an $\FF_p$-linear map $\pi: \FF_p^{\rf}\to
\picz[p]$ by specifying the image of the basis elements,
\begin{equation}
  \label{eq:1}
  \pi(\phi_i)=D_i=P_{q/p}((P_i)-(\infty))=a\,mp^{r-1}(P_i)-ap^{r-s-1}\sum_{k=1}^n(P_k). \end{equation}
 It is clear from the right hand side
of (\ref{eq:1}) that $\pi$ is also a $\Gal(f)$-equivariant map. 

By Lemma \ref{lem:l1}, given an element
$\sum_{i=1}^nx_i\phi_i \in \FF_p^{\rf}$,  
\[
\begin{split}
  \pi(\sum_{i=1}^n x_i\phi_i)=0 &\Leftrightarrow \sum_{i=1}^n
  x_i\left(a\,mp^{r-1}(P_i)-ap^{r-s-1}\sum_{k=1}^n(P_k)\right ) \sim 0 \\
   &\Leftrightarrow \forall(i,j),\: x_ia\,mp^{r-1}\equiv
   x_ja\,mp^{r-1}  \mod q \\
     &\Leftrightarrow \forall(i,j),x_i=x_j \qquad \text{since } p
     \nmid am\\
   &\Leftrightarrow \sum_{i=1}^{n}x_i \phi_i \in \FF_p \cdot \mathbf{1}
\end{split}
\]
Therefore, $\pi$ induces an $\FF_p$-linear embedding $\bar{\pi}:
V_{\rf}=\FF_p^{\rf}/(\FF_p\cdot \mathbf{1}) \hookrightarrow \picz[p]$
of $\Gal(f)$-modules. On the other hand, recall that $\picz$ is
defined to be the homomorphic image of $\divz$, which is free abelian
of rank $n-1$. It follows that $\picz$ can be generated by $n-1$
elements. Hence $\dim_{\FF_p} \picz[p] \leq n-1$. Note that $\dim_{\FF_p}
V_{\rf} =n-1$. We conclude that $\bar{\pi}$ is an isomorphism. It is
clear from our construction that $\picz[p] \subseteq
J^{(f,q)}_{\lambda}$.
\end{proof}

The case when $q=p$ of the previous lemma was treated by B. Poonen and E.
Schaefer (see \cite{PoSch} and \cite{Sch}). 

\begin{rem}\label{rem:tate}
  Let $T_p(J^{(f,q)})$ be the $p$-adic Tate
  module (cf. \cite[p.170]{mumford},\cite[1.2]{serre}) of $J^{(f,q)}$
  defined as the projective limit of Galois modules
  $J^{(f,q)}[p^j]$. Let $\O_\lambda=\O\otimes\ZZ_p$ be the completion
  of $\O$ with respect to the $\lambda$-adic topology. By
  \cite[prop 2.2.1]{ribet}, $T_p(J^{(f,q)})$ is a free $\O_\lambda$
  module of rank $2\dim J^{(f,q)} / [E:\QQ]=2\dim J^{(f,q)}
  /(p^r-p^{r-1})$.
\end{rem}

\begin{prop}\label{prop:dim}
  The abelian variety $J^{(f,q)}$ has dimension $\dim\left(
    J^{(f,q)}\right)= (p^r-p^{r-1})(n-1)/2$.  There is a $K$-isogeny
  $J(C_{f,q}) \to J(C_{f,q/p}) \times J^{(f,q)}$, and the
  $\Gal(\bar{K}/K)$-module $J^{(f,q)}_\lambda$ coincides with
  $\picz[p]$. In particular, the $\Gal(\bar{K}/K)$-action on
  $J^{(f,q)}_\lambda$ factors through $\Gal(f)$.
\end{prop}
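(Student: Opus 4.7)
The plan is to handle the three assertions in sequence: first compute $\dim J^{(f,q)}$ by analyzing the $\dq^*$-eigenspace decomposition of $\Omega^1(J(C_{f,q}))$; then use this dimension count to build the $K$-isogeny out of the natural degree-$p$ cover $C_{f,q}\to C_{f,q/p}$; and finally upgrade the inclusion $\picz[p]\subseteq J^{(f,q)}_\lambda$ of Lemma \ref{lem:picz2} to an equality by a Tate-module rank count.

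For the dimension, because $\dq^*$ is semisimple with minimal polynomial $\mathcal{P}_q(t)=\mathcal{P}_{q/p}(t)\Phi_q(t)$ on $\Omega^1(J(C_{f,q}))$ (Remark \ref{rm:midq}), the image of $\mathcal{P}_{q/p}(\dq^*)$---which is naturally identified with $\Omega^1(J^{(f,q)})$---is the direct sum of those $\zeta_q^{-i}$-eigenspaces with $p\nmid i$. The cleanest way to evaluate the total dimension is to apply Lemma \ref{lem:inf}(i) to both $C_{f,q}$ and $C_{f,q/p}$: because $s<r$, the exponent of $p$ in $n$ is unchanged when $q$ drops to $q/p$, so the $1-p^s$ correction terms in the two genus formulas cancel and one obtains $g(C_{f,q})-g(C_{f,q/p})=(p^r-p^{r-1})(n-1)/2$, as required.

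For the isogeny, the $K$-morphism $\pi\colon C_{f,q}\to C_{f,q/p}$, $(x,y)\mapsto(x,y^p)$, satisfies $\pi\circ\dq=\delta_{q/p}\circ\pi$, so the Albanese pullback $\pi^*\colon J(C_{f,q/p})\to J(C_{f,q})$ is $K$-defined with finite kernel, and its image $A=\pi^*(J(C_{f,q/p}))$ is annihilated by $\mathcal{P}_{q/p}(\dq)$. The previous step gives $\dim A+\dim J^{(f,q)}=\dim J(C_{f,q})$, so the addition map $A\times J^{(f,q)}\to J(C_{f,q})$ is surjective; it has finite kernel because any element of $A\cap J^{(f,q)}$ is killed by both $\mathcal{P}_{q/p}(\dq)$ and $\Phi_q(\dq)$, and these two polynomials are coprime in $\QQ[t]$, producing a nonzero integer annihilator. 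Composing with $\pi^*\times\Id$ yields the desired $K$-isogeny.

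For the Galois-module identification, Remark \ref{rem:tate} says that $T_p(J^{(f,q)})$ is a free $\O_\lambda$-module of rank $2\dim J^{(f,q)}/(p^r-p^{r-1})=n-1$, so $J^{(f,q)}_\lambda=J^{(f,q)}[\lambda]$ has $\FF_p$-dimension exactly $n-1=\dim_{\FF_p}\picz[p]$. Combined with Lemma \ref{lem:picz2}, this forces $\picz[p]=J^{(f,q)}_\lambda$, and since the Galois action on $\picz[p]$ permutes the basis $\{D_i\}$ indexed by $\rf$, it factors through $\Gal(f)$. I expect the main obstacle to be the first step---reconciling the cotangent-space description of $\dim J^{(f,q)}$ with the explicit genus computation---since once that dimension is in hand, both the isogeny (via Poincar\'e-style addition) and the torsion comparison (via Tate-module rank) follow essentially formally.
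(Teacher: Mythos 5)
Your proof is correct and follows essentially the same route as the paper, which simply defers to Zarhin's Lemma 4.11: a dimension count on the $\dq^*$-eigenspaces combined with Lemma \ref{lem:picz2} and the Tate-module rank from Remark \ref{rem:tate}, followed by the standard coprimality argument for the isogeny. The only step you leave implicit is that $\sum_{p\mid i} n_i = g(C_{f,q/p})$, which is what actually equates $\dim J^{(f,q)}=\sum_{p\nmid i}n_i$ with $g(C_{f,q})-g(C_{f,q/p})$; it is immediate from $n_{pi'}=[npi'/q]_S=[ni'/(q/p)]_S$ and Proposition \ref{prop:diff}(ii) applied to $C_{f,q/p}$.
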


\begin{proof}
  Once again the $p\nmid \deg f$ version of the Proposition has
  already been done in
  \cite[Lemma 4.11]{zarhin05}. The proof goes exactly the same way in
  our case. The idea is to do an argument on the dimensions based on
  Lemma \ref{lem:min} and Lemma \ref{lem:picz2}. See the proof of
  \cite[Lemma 4.11]{zarhin05} for more details.
\end{proof}

\begin{cor}\label{cor:iso}
  There is a $K$-isogeny 
\[J(C_{f,q}) \to J(C_{f,p})\times \prod_{i=2}^r
J^{(f,p^i)}=\prod_{i=1}^{r} J^{(f,p^i)}.\]
\end{cor}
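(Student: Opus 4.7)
The plan is to prove Corollary~\ref{cor:iso} by iterating Proposition~\ref{prop:dim}, using induction on $r$. The base case $r=1$ is immediate: when $q=p$, Remark~\ref{rem:31} tells us $J^{(f,p)}=J(C_{f,p})$, so the product $\prod_{i=1}^{1} J^{(f,p^i)}$ is literally $J(C_{f,q})$ and the identity map is a (trivial) $K$-isogeny.

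For the inductive step, I would assume the corollary for exponent $r-1$, i.e.\ that there is a $K$-isogeny $J(C_{f,p^{r-1}})\to\prod_{i=1}^{r-1}J^{(f,p^i)}$, and then apply Proposition~\ref{prop:dim} to $J(C_{f,q})=J(C_{f,p^r})$ to obtain the $K$-isogeny
\[ J(C_{f,p^r})\longrightarrow J(C_{f,p^{r-1}})\times J^{(f,p^r)}. \]
Composing with the inductive hypothesis on the first factor (extended by the identity on the second factor) yields a $K$-isogeny
\[ J(C_{f,q})\longrightarrow \Bigl(\prod_{i=1}^{r-1}J^{(f,p^i)}\Bigr)\times J^{(f,p^r)}=\prod_{i=1}^{r}J^{(f,p^i)}, \]
which matches the statement after using Remark~\ref{rem:31} to identify the $i=1$ factor with $J(C_{f,p})$.

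The only subtlety — and what I would flag as the ``main obstacle'' even though it is quite mild — is that the inductive hypothesis requires the decomposition $J(C_{f,p^j})\sim J(C_{f,p^{j-1}})\times J^{(f,p^j)}$ for every $1<j\le r$, and our running hypothesis $0<s<r$ need not transfer to the smaller exponents. Writing $n=mp^s$ with $p\nmid m$, three possibilities occur for a given $j$: either $0<s<j$ (the case proved in Proposition~\ref{prop:dim}), or $s\ge j$ (equivalently $p^j\mid n$), or $s=0$. The latter two cases are exactly the situations treated in \cite{zarhin05} and explicitly recorded in Remark~\ref{s1:proof}, and the proof of Proposition~\ref{prop:dim} itself notes that \cite[Lemma 4.11]{zarhin05} supplies the required decomposition there. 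Thus, whichever regime the index $j$ falls into, the single-step decomposition is available, and the induction runs uniformly through all $j$ from $r$ down to $2$, at which point the base case $j=1$ closes off the argument.
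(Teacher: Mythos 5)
Your proof is correct and follows essentially the same route as the paper, which disposes of the corollary in one line by inducting on the exponent and invoking Proposition~\ref{prop:dim} together with Remark~\ref{rem:31}. Your extra care about the levels $j\le s$, where $p^j\mid n$ and the single-step decomposition comes from \cite{zarhin05} rather than from Proposition~\ref{prop:dim} as stated, is a legitimate point that the paper's terse proof glosses over, and you resolve it correctly.
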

\begin{proof}
  This follows from induction on $i$ and the fact that
  $J^{(f,p)}=J(C_{f,p})$, as pointed out in  Remark \ref{rem:31}. 
\end{proof}
\begin{rem} \label{rem:s3las}
  Notice that Theorem \ref{thm:t1} will follow if we show that
  $\End^0(J^{(f,p^i)})\cong \QQ(\zeta_{p^i})$ for all $1\leq i\leq
  r$. The case that $i\leq s$ has already been treated in \cite[Theorem
  4.17]{zarhin05}.
\end{rem}

Let $V$ be a vector space over a field $\FF$, let $G$ be a group and
$\rho: G\to \Aut_\FF(V)$ a linear representation of $G$ in $V$. We
write $\End_G(V)$ for the $\FF$-algebra of $G$-equivariant endomorphisms of
$V$. The equality $J^{(f,q)}_\lambda=\picz[p]$, together with Lemma
\ref{lem:picz2}, enables us to determine
$\End_{\Gal(f)}(J^{(f,q)}_\lambda)$.  
% the Galois module $J^{(f,q)}_\lambda$. First we cite the following
% definition from \cite[Definition 4.1]{zarhin02}.
% \begin{Def}
%    We
%   say that the $G$-module $V$ is very simple if it enjoys the
%   following property:

% If $R\subseteq \End_\FF(V)$ is an $\FF$-subalgebra containing the
% identity operator $\Id$ such that
% $\rho(\sigma)R\rho(\sigma)^{-1}\subseteq R, \forall \sigma \in G$, then
% either $R=\FF\cdot \Id$ or $R=\End_\FF(V)$. 
% \end{Def}

% When $\Gal(f)$ is either $\mathbf{S}_n$ or $\mathbf{A}_n$, the
% $\Gal(f)$-module $(\FF_p^{\rf})^{00}$ is both absolutely simple and
% very simple (see \cite{zarhin02}, Lemma 2.6 and Theorem 4.7).
% Moreover, if we write $\End_{G}(V)$ for the sub-algebra of
% $\End_{\FF}(V)$ that commutes with the action of $G$, then it follows
% from the absolute simplicity of $(\FF_p^{\rf})^{00}$ that
% $\End_{\Gal(f)}((\FF_p^{\rf})^{00})=\FF_p\cdot \Id$.

\begin{lem}\label{lem:s2las}
  Let $n=mp^s\geq 5$. Assume that $0<s <r$ and $\Gal(f)$ is either
  $\mathbf{S}_n$ or $\mathbf{A}_n$. Then
  $\End_{\Gal(f)}(J^{(f,q)}_\lambda)=\FF_p\cdot \Id$.
\end{lem}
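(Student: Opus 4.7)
The plan is to translate the problem to a question in modular representation theory. By Lemma \ref{lem:picz2} and Proposition \ref{prop:dim} we have $J^{(f,q)}_\lambda = \picz[p] \cong V_{\rf}$ as $\Gal(f)$-modules. Since in both alternatives $\mathbf{A}_n \subseteq \Gal(f)$, any $\Gal(f)$-equivariant endomorphism is a fortiori $\mathbf{A}_n$-equivariant, so it suffices to prove $\End_{\mathbf{A}_n}(V_{\rf}) = \FF_p \cdot \Id$. The strategy is to show that $V_{\rf}$ is uniserial with composition series $0 \subsetneq (\FF_p^{\rf})^{00} \subsetneq V_{\rf}$ and then apply Schur's lemma.

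First I would check $V_{\rf}^{\mathbf{A}_n} = 0$. For a lift $v = \sum c_i \phi_i \in \FF_p^{\rf}$ of an invariant class, the condition $(g-1)v \in \FF_p \cdot \mathbf{1}$ for the $3$-cycles $g = (1,i,j) \in \mathbf{A}_n$ forces, by inspecting the support, all coordinates $c_k$ to coincide, so $v \in \FF_p \cdot \mathbf{1}$. Dually, via the standard pairing $V_{\rf}^* \cong (\FF_p^{\rf})^0$ (the annihilator of $\FF_p \cdot \mathbf{1}$; here we use $p \mid n$, so $\mathbf{1} \in (\FF_p^{\rf})^0$), the invariants $((\FF_p^{\rf})^0)^{\mathbf{A}_n} = \FF_p \cdot \mathbf{1}$ are one-dimensional, exhibiting $(\FF_p^{\rf})^{00}$ as the unique codimension-one submodule of $V_{\rf}$.

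To rule out submodules of intermediate dimension, I would use that $(\FF_p^{\rf})^{00}$ is $\mathbf{A}_n$-simple: any submodule $N \subseteq V_{\rf}$ either lies in $(\FF_p^{\rf})^{00}$ (whereupon $N = 0$ or $N = (\FF_p^{\rf})^{00}$ by simplicity), or satisfies $N + (\FF_p^{\rf})^{00} = V_{\rf}$, making $N \cap (\FF_p^{\rf})^{00}$ a submodule of $(\FF_p^{\rf})^{00}$ of codimension one in $N$; simplicity forces this intersection to be either $0$ (whence $\dim N = 1$, excluded since $\mathbf{A}_n$ is perfect and $V_{\rf}^{\mathbf{A}_n} = 0$) or $(\FF_p^{\rf})^{00}$ itself (whence $N = V_{\rf}$). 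Hence $V_{\rf}$ is uniserial. To conclude, take $\phi \in \End_{\mathbf{A}_n}(V_{\rf})$: uniseriality forces $\phi$ to stabilize $(\FF_p^{\rf})^{00}$, and absolute irreducibility gives $\phi|_{(\FF_p^{\rf})^{00}} = c \cdot \Id$ for some $c \in \FF_p$. Then $\phi - c \cdot \Id$ vanishes on $(\FF_p^{\rf})^{00}$ and factors as $V_{\rf} \twoheadrightarrow V_{\rf}/(\FF_p^{\rf})^{00} = \FF_p \to V_{\rf}$; the second arrow is an $\mathbf{A}_n$-map from the trivial module, so its image lies in $V_{\rf}^{\mathbf{A}_n} = 0$, forcing $\phi = c \cdot \Id$.

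The main obstacle is establishing the absolute irreducibility of the ``heart'' module $(\FF_p^{\rf})^{00}$ as an $\mathbf{A}_n$-module when $p \mid n$ and $n \geq 5$. As an $\mathbf{S}_n$-module this is the $(n-2)$-dimensional simple quotient $D^{(n-1,1)}$ of the Specht module $S^{(n-1,1)}$; it remains absolutely simple on restriction to $\mathbf{A}_n$ because the partition $(n-1,1)$ is not self-conjugate for $n \geq 4$, so that $D^{(n-1,1)} \otimes \mathrm{sgn} \not\cong D^{(n-1,1)}$. This is a standard result from the modular representation theory of symmetric and alternating groups, to be invoked by citation rather than rederived.
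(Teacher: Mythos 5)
Your reduction to $\End_{\mathbf{A}_n}(V_{\rf})=\FF_p\cdot \Id$ and the module-theoretic skeleton are sound, and in places cleaner than the paper's own argument: the paper likewise passes through $J^{(f,q)}_\lambda\cong V_{\rf}$ and the absolute simplicity of the heart $W=(\FF_p^{\rf})^{00}$, but where you prove $V_{\rf}^{\mathbf{A}_n}=0$ by a support argument with $3$-cycles and then kill $\phi-c\cdot\Id$ by factoring it through the trivial quotient $V_{\rf}/W$, the paper instead runs an explicit coordinate computation using double transitivity of $\Gal(f)$. These two halves are equivalent in content; your uniseriality digression is correct but not strictly needed, since all you actually use downstream is $\phi(W)\subseteq W$, $\End_{\mathbf{A}_n}(W)=\FF_p\cdot\Id_W$, and $V_{\rf}^{\mathbf{A}_n}=0$.

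The genuine gap is in your justification of the one nontrivial input, namely the absolute simplicity of $W$ as an $\mathbf{A}_n$-module. You argue: $(n-1,1)$ is not self-conjugate, hence $D^{(n-1,1)}\otimes\mathrm{sgn}\not\cong D^{(n-1,1)}$, hence the restriction to $\mathbf{A}_n$ stays absolutely simple. This breaks down for $p=2$, which is very much in scope here (e.g. $n=6$, $q=4$, $s=1$): then $\mathrm{sgn}$ is the trivial module, so $D\otimes\mathrm{sgn}\cong D$ for every $D$, and moreover the index of $\mathbf{A}_n$ in $\mathbf{S}_n$ equals the characteristic, so the characteristic-zero Clifford dichotomy you are invoking does not apply in that form (one needs instead Benson's combinatorial criterion for when $D^\lambda$ splits over $\mathbf{A}_n$ in characteristic $2$, plus a separate check that the endomorphism field does not grow to $\FF_4$). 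Even for odd $p$ the criterion is off: $D^\lambda\otimes\mathrm{sgn}\cong D^{\lambda^{M}}$ with $\lambda^{M}$ the Mullineux conjugate, not the ordinary conjugate, so ``not self-conjugate'' is not the right hypothesis in modular representation theory. The conclusion you want is true, and the efficient way to get it --- which is what the paper does --- is to cite Mortimer \cite{Mor}, whose theorem on the heart of the natural permutation module of a doubly transitive group yields simplicity together with $\End_{\mathbf{A}_n}(W)=\FF_p\cdot\Id_W$ for $n\geq 5$ and all $p$ in one stroke. Replace your Specht-module derivation by that citation and your proof is complete.
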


\begin{proof}
  By Proposition \ref{prop:dim} and Lemma \ref{lem:picz2}(i), we have
  $J^{(f,q)}_\lambda=\picz[p] \cong V_{\rf}$ as $\Gal(f)$-modules. It
  is enough to prove $\End_{\Gal(f)}(V_{\rf})=\FF_p\cdot \Id$.  Recall
  that when $p \mid n$, $(\FF_p^{\rf})^{00}$ is a codimension one
  submodule of $V_{\rf}$.  For simplicity, we write $V$ for $V_{\rf}$,
  and $W$ for $(\FF_p^{\rf})^{00}$.  It is known (Mortimer \cite{Mor}) that $W$
  is a simple $\Gal(f)$-module with $\End_{\Gal(f)}(W)=\FF_p\cdot
  \Id_W$. Given $\theta \in \End_{\Gal(f)}(V_{\rf})$.  the map
  $\theta\vert_{W}: W\to\theta(W)$ is either zero or an isomorphism of
  $\Gal(f)$-modules. In the latter case, unless $\theta(W)= W$, the
  intersection $\theta(W)\cap W$ would be a proper nonzero submodule
  of $W$, contradicting the simplicity of $W$.  Hence
  $\theta\vert_W \in \End_{\Gal(f)}(W)=\FF_p\cdot \Id_W$.  By subtracting an
  element of $\FF_p\cdot \Id_V$, we may and will assume that
  $\theta\mid_W=0$. In order to show that $\End_{\Gal(f)}(V)=\FF_p\cdot
  \Id_V$, it is enough to show that $\theta(V)=0$.  Recall that the
  $\FF_p$-vector space $V$ is generated the set
  $\{\bar{\phi}_i\}_{i=1}^n$, where $\bar{\phi}_i$ denotes the
  equivalent class of $\phi_i$ modulo $\FF_p\cdot \sum_{i=1}^n
  \phi_i$.  We identify $\Gal(f)$ with the permutation group on $n$
  letters $\{1, \ldots, n\}$ and adopt the convention that $\forall g
  \in \Gal(f),\bar{\phi}_i \in V, g(\bar{\phi}_i)=\bar{\phi}_{g(i)}$.
By definition of
  $W=(\FF_p^{\rf})^{00}$, given $g \in \Gal(f)$ and $\bar{\phi} \in
  V$, the element $g\bar{\phi}-\bar{\phi}\in W$. Thus
  $\theta(g\bar{\phi})=\theta(\bar{\phi})$.   
Assume that
\[ \theta(g\bar{\phi})=\theta(\bar{\phi})=\sum_{i=1}^n a_i
\bar{\phi}_i, \qquad \text{and write}\qquad   h=g^{-1}.  \]
Then \[ g\theta(\bar{\phi})= g \sum_{i=1}^n a_i
\bar{\phi}_i=\sum_{i=1}^n a_i
\bar{\phi}_{g(i)}=\sum_{i=1}^n a_{h(i)}
\bar{\phi}_{i}.\]
It follows that \[
\begin{split}
  \theta(g\bar{\phi})=g\theta(\bar{\phi}) &\Leftrightarrow \sum_{i=1}^n a_i
\bar{\phi}_i =\sum_{i=1}^n a_{h(i)}
\bar{\phi}_{i}\\
   &\Leftrightarrow  \sum_{i=1}^n a_i
\phi_i =\sum_{i=1}^n a_{h(i)}
\phi_{i} \mod \FF_p \cdot \sum_{i=1}^n \phi_i\\
&\Leftrightarrow a_i -a_j = a_{h(i)}-a_{h(j)} \qquad \forall 1 \leq i
, j \leq n.
\end{split}
\]
Since $\Gal(f)$ is either $\mathbf{S}_n$ or $\mathbf{A}_n$, it is
doubly transitive. Fix an index $i$. For any pair $(j,k)$ such that
neither $j$ nor $k$ equals to $i$, there exist $g_{jk}\in \Gal(f)$
such that $h_{jk}(i)=g_{jk}^{-1}(i)=i$ and
$h_{jk}(j)=g_{jk}^{-1}(j)=k$. It follows from $\theta g_{jk} = g_{jk}
\theta$ that $ a_i -a_j=a_i-a_k$. Thus $a_j=a_k$ for all pairs $(j,k)$
where neither entry is equal to $i$. Varying the index $i$ shows that
there exist an $a\in \FF_p$ such that $a_j=a$ for all $1 \leq j \leq
n$. Hence $\theta(\bar{\phi})=\sum_{i=1}^n a_i \bar{\phi}_i=a
\sum_{i=1}^n \bar{\phi}_i=0$.
\end{proof}

\section{The endomorphism algebra of $J^{(f,q)}$}
Let $E$ be a number field that is normal over $\QQ$ with ring of
integers $\O$. Let $K$ be a field of characteristic zero that contains
a subfield isomorphic to $E$. Let $(X,\ii)$ be a pair such that $X$ is
an abelian variety over $K$ with an embedding
$\ii(\O)\subseteq \End_K(X)$. The tangent space $\Lie_K(X)$ to $X$ at
the identity carries a natural structure of an $E\otimes_\QQ
K$-module. For each field embedding $\tau: E\hookrightarrow K$, we put
\begin{gather*}
\Lie_K(X)_\tau =\{z\in \Lie_K(X) \mid \ii(e)z=\tau(e)z, \forall e \in
E\}\\
n_\tau(X, E)=\dim_K\Lie_K(X)_\tau   
\end{gather*}

Put $X=J^{(f,q)}$ and $E=\QQ(\zeta_q)$. Let us consider the induced operator
$(\dq \vert_{J^{(f,q)}})^*: \Omega^1(J^{(f,q)})\to
\Omega^1(J^{(f,q)})$.  Applying \cite{zarhin05} Theorem 3.10 to
$Y=J(C_{f,q})$, $Z=J^{(f,q)}$, and $P(t)=\mathcal{P}_{q/p}(t)$, we see
that the spectrum of $(\dq \mid_{J^{(f,q)}})^*$ consists of primitive
$q$-th roots of unity $\zeta^{-i}$ with $n_i>0$, and the multiplicity
of $\zeta^{-i}$ equals $n_i\:$ (see part (ii) of Proposition
\ref{prop:diff}). 
Let $\tau_i: E=\QQ(\zeta_q)\hookrightarrow K$ be the embedding that
sends $\zeta_q$ to $\zeta_q^{-i}$; then $n_{\tau_i}(J^{(f,q)},
\QQ(\zeta_q))=n_i$.

Let $\mathfrak{C}_X$ denote the center of the endomorphism algebra
$\End^0(X)$. We quote the following theorem (cf \cite[Theorem
2.3]{zarhin04}).

\begin{thm}\label{thm:center}
  If $E/\QQ$ is Galois, $\ii(E)$ contains $\mathfrak{C}_X$ and
  $\mathfrak{C}_X \neq \ii(E)$, then there exists a nontrivial automorphism
  $\kappa: E\to E$ such that $n_\tau(X, E)=n_{\tau\kappa}(X, E)$ for
  all embeddings $\tau: E \hookrightarrow K$. 
\end{thm}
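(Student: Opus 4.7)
The plan is to extract $\kappa$ from Galois theory applied to the extension $E\supsetneq \ii^{-1}(\mathfrak{C}_X)$, and then realize it as conjugation by a unit inside $\End^0(X)$ that intertwines the eigenspaces $\Lie_K(X)_\tau$. Put $L=\ii^{-1}(\mathfrak{C}_X)$. The hypothesis $\mathfrak{C}_X\neq \ii(E)$ gives $L\subsetneq E$, and since $E/\QQ$ is Galois so is $E/L$; choose any nontrivial $\kappa\in\Gal(E/L)$ (the $\kappa$ demanded by the conclusion will be this one, possibly inverted). Because $\ii(E)$ is a field containing $\mathfrak{C}_X$ and $\mathfrak{C}_X$ is a finite-dimensional $\QQ$-subalgebra of that field, $\mathfrak{C}_X$ is itself a field; consequently $X$ is isotypic up to isogeny and $D:=\End^0(X)$ is a central simple algebra over $\mathfrak{C}_X$.

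Now consider the two $\mathfrak{C}_X$-algebra embeddings $\ii$ and $\ii\circ\kappa$ of $E$ into $D$. They have the common image $\ii(E)\subset D$, and since $\kappa$ fixes $L$ pointwise they agree on $L$. Applying the Skolem--Noether theorem to $D$ as a central simple $\mathfrak{C}_X$-algebra produces an element $u\in D^\times$ with
\[u\,\ii(e)\,u^{-1}=\ii(\kappa(e))\qquad\text{for all }e\in E.\]

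Finally, $u$ acts on $T:=\Lie_K(X)$ via the natural map $D\otimes_\QQ K\to\End_K(T)$. For some $N\in\NN$ the multiple $Nu$ is an isogeny lying in $\End(X)$, and in characteristic zero its differential on $T$ is a $K$-linear isomorphism; hence $u$ itself acts invertibly on $T$. For $z\in \Lie_K(X)_\tau$ and $e\in E$ we compute
\[\ii(e)\cdot(uz)=u\bigl(u^{-1}\ii(e)u\bigr)z=u\,\ii(\kappa^{-1}(e))\,z=\tau(\kappa^{-1}(e))\,uz,\]
so $u$ restricts to an isomorphism $\Lie_K(X)_\tau\xrightarrow{\sim}\Lie_K(X)_{\tau\circ\kappa^{-1}}$ and thus $n_\tau(X,E)=n_{\tau\kappa^{-1}}(X,E)$ for every embedding $\tau$. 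Renaming $\kappa^{-1}$ as $\kappa$ (which is still a nontrivial element of $\Gal(E/L)$) yields the stated equality. The one delicate point is the invocation of Skolem--Noether: it requires $D$ to be central simple over a \emph{field}, which is precisely what the hypothesis $\mathfrak{C}_X\subseteq\ii(E)$ forces. After that, producing the intertwiner and tracking how it permutes the $\tau$-eigenspaces is a short calculation.
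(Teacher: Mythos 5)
The paper does not actually prove this statement: it is quoted from \cite[Theorem 2.3]{zarhin04}, so there is no internal argument to compare against. Your proof is correct and, to the best of my knowledge, coincides with the standard proof of the cited result: $\mathfrak{C}_X$ is a field because it sits inside $\ii(E)$, so $\End^0(X)$ is central simple over it, Skolem--Noether realizes a nontrivial $\kappa\in\Gal\bigl(E/\ii^{-1}(\mathfrak{C}_X)\bigr)$ as conjugation by a unit $u$, and the differential of $u$ permutes the eigenspaces $\Lie_K(X)_\tau$ along $\tau\mapsto\tau\kappa^{-1}$. The only point you pass over lightly---that the injections $\Lie_K(X)_\tau\hookrightarrow\Lie_K(X)_{\tau\kappa^{-1}}$ induced by $u$ are isomorphisms rather than mere inclusions of dimension---follows at once by summing the resulting inequalities $n_\tau\le n_{\tau\kappa^{-1}}$ over a $\langle\kappa\rangle$-orbit, so this is cosmetic rather than a gap.
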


For simplicity, let $[a,b]_\ZZ$ denote the set of integers in the
closed interval $[a,b]\subseteq \RR$ that are not divisible by $p$, where $p$
should be apparent from the context. We note that $[1,p^r]_{\ZZ}$ is
the usual set of representatives of the group $(\ZZ/p^r\ZZ)^\times$,
which is isomorphic to $\Gal(\QQ(\zeta_{p^r})/\QQ)$.

The following
two lemmas, together with Theorem \ref{thm:center}, enable us to prove
that $\ii(\QQ(\zeta_q))$ coincides with $\mathfrak{C}_{J^{(f,q)}}$
whenever $\ii(\QQ(\zeta_q))$ contains $\mathfrak{C}_{J^{(f,q)}}$.
\begin{lem} \label{lem:num1} Let $p$ be an odd prime and $q=p^r$ be a
  power of $p$. Given $k \in (\ZZ/q\ZZ)^{\times}$, let $\theta_k$
  denote the action of $k$ on $(\ZZ/q\ZZ)^{\times}$ by multiplication,
  i.e., $\theta_k(u)=ku$ for any $u \in (\ZZ/q\ZZ)^{\times}$.  Let $f:
  (\ZZ/q\ZZ)^{\times} \rightarrow \RR$ be a monotonic function on
  $[1,q]_\ZZ$.  The following statements are equivalent:\\ 
  {\rm{(i)}}$f\circ \theta_k =f$.\\
  {\rm{(ii)}} $k=1$ or $f$ is a constant function.
\end{lem}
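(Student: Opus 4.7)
The direction (ii) $\Rightarrow$ (i) is immediate, so I focus on (i) $\Rightarrow$ (ii). Suppose $f\circ\theta_k=f$ and $k\neq 1$; I show $f$ must be constant. Replacing $f$ by $-f$ if necessary, assume $f$ is non-decreasing on the ordered set $[1,q]_\ZZ$, and suppose for contradiction that $f$ is non-constant. Iterating $f\circ\theta_k=f$ gives $f(k^j\bmod q)=f(1)$ for every $j\geq 0$, so $f$ is constant on $G:=\langle k\rangle$, regarded as a subset of $[1,q-1]$ via integer representatives. With $M:=\max G\geq k\geq 2$, monotonicity extends this to $f\equiv f(1)$ on the integer interval $[1,M]_\ZZ$. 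Symmetrically, the orbit of $q-1$ is $-G$, which has minimum $q-M$ and maximum $q-1$, so $f\equiv f(q-1)$ on $[q-M,q-1]_\ZZ$. If $M\geq q/2$ these two intervals meet, forcing $f(1)=f(q-1)$ and hence $f$ constant; so from now on assume $M<q/2$.

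I now iterate outward. For each $j\in[1,q]_\ZZ$ with $jM<q$, the $\theta_k$-orbit of $j$ is $jG\bmod q=\{jg:g\in G\}\subseteq[j,jM]$ (no wrap-around, since $jg\leq jM<q$), so $f$ is constant on $[j,jM]_\ZZ$ with value $f(j)$. Traverse $[1,q]_\ZZ$ in increasing order; suppose inductively that $f\equiv f(1)$ on $[1,j'M]_\ZZ$, where $j'$ is the predecessor of the current $j$. Then $j\leq j'M$ forces $f(j)=f(1)$, and constancy extends to $[1,jM]_\ZZ$. The inequality $j\leq j'M$ holds for the ordinary step $j=j'+1$ whenever $M\geq 2$, and for the $p$-skip $j=j'+2$ (when $j'+1\equiv 0\pmod p$) whenever $j'\geq 2$; the only possible exception $j'=1,j=3$ would require $p=2$, which is excluded.

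Let $j^*$ be the largest element of $[1,q]_\ZZ$ with $j^*M<q$. Then $f\equiv f(1)$ on $[1,j^*M]_\ZZ$, and by the analogous iteration from the top (using orbits of $q-j$) $f\equiv f(q-1)$ on $[q-j^*M,q-1]_\ZZ$. The successor of $j^*$ in $[1,q]_\ZZ$ is either $j^*+1$ (if $p\nmid j^*+1$), in which case $(j^*+1)M\geq q$ yields $j^*M\geq q-M>q/2$; or $j^*+2=cp+1$ (when $j^*+1=cp$), in which case $(cp+1)M\geq q$ yields
\[j^*M=(cp-1)M\geq\frac{cp-1}{cp+1}\,q\geq\frac{q}{2},\]
the last inequality using $cp\geq p\geq 3$. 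In either case $j^*M\geq q/2\geq q-j^*M$, so the two constancy intervals overlap, giving $f(1)=f(q-1)$; monotonicity then forces $f$ constant on $[1,q]_\ZZ$, contradicting non-constancy.

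The main obstacle is the case $M<q/2$, where the two ``constant-value'' intervals $[1,M]$ and $[q-M,q-1]$ are disjoint and one must propagate constancy outward through the orbits of $2,3,\dots$. The elementary bound $(cp-1)/(cp+1)\geq 1/2$ for $cp\geq 3$, which relies crucially on $p$ being an odd prime, is what ensures the iteration reaches far enough for the two intervals to overlap at the end.
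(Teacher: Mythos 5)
Your proof is correct, and while it rests on the same two engines as the paper's argument --- $f$ is constant on each $\theta_k$-orbit, and an orbit $jG$ that does not wrap around modulo $q$ spans the integer interval $[j,jM]$, so monotonicity fills the gaps --- the execution is genuinely different. The paper first reduces to the case where $\ord(k)$ is prime (dispatching $\ord(k)=2$ at once via $f(q-1)=f(1)$), takes $k$ minimal and $m_S$ maximal in $\langle k\rangle$, and in the hard case $2m_S\le q-1$ bridges the gap with a single carefully chosen auxiliary orbit, that of $c\approx [q/k]$, whose image $ck$ lands within $2k$ of $q$; it then finishes with the orbits of $-2$ and $-1$. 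You instead keep the full subgroup $G=\langle k\rangle$ with no reduction to prime order, and run a uniform induction over the orbits of $j=1,2,3,\dots$, propagating constancy to $[1,jM]_\ZZ$ until $j^*M\ge q/2$ meets the mirror-image intervals coming down from $q-1$; the hypothesis that $p$ is odd enters through the harmless $p$-skips in the induction and the bound $(cp-1)/(cp+1)\ge 1/2$. Your version avoids the prime-order reduction and the ad hoc choice of $c$ at the cost of a longer induction; the paper's is shorter per case but less systematic. One step you gloss over --- that the two overlapping real intervals actually share a point of $[1,q]_\ZZ$ --- is harmless, since $j^*M$ itself is coprime to $p$ and lies in both.
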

\begin{proof}
  Assume that $k \neq 1$. It suffices to show that if $f \circ
  \theta_k =f$, then $f$ is a constant function. Let $\ord(k)$ be the
  order of the element $k$ in $(\ZZ/q\ZZ)^{\times}$. Note that $f
  \circ \theta_k =f$ is equivalent to $f \circ \theta_{k^j} =f$ for
  all $0\leq j < \ord k$.  Since $p$ is an odd prime,
  $(\ZZ/q\ZZ)^{\times}$ is a cyclic group. Given $k$ and $k'$ in
  $(\ZZ/q\ZZ)^{\times}$ with $\ord(k) \mid \ord(k')$, the cyclic group
  $\langle k \rangle$ is a subgroup of $\langle k' \rangle$. Suppose
  the lemma holds for $k$; a fortiori it holds true if one replace $k$
  by $k'$. Thus one may assume that $\ord k$ is a prime. Note if $\ord
  k=2$, then $k=q-1$. Hence $f(q-1)=f(1)$, and $f$ is constant by
  monotonicity. So we further assume that $\ord k$ is an odd prime.

  Any element in $\langle k \rangle$ other than the identity is a
  generator of $\langle k \rangle$. Without loss of generality, we
  assume that $k$ is the smallest in the set
  \[S=\{i \mid 1<i < q, \exists j \text{ such that } i \equiv k^j
  \pmod q\}.\] In other words, $S$ is the set of representatives
  strictly between $1$ and $q$ for non-identity elements of the cyclic
  group $\langle k \rangle$. Also let $m_S$ denote the largest element
  of the set $S$. Notice that $[q/k]<m_S$ by choice of $m_S$, and
  $k<m_S$, since we have more than two elements in $S$.

  From the equality $f(kx)=f(x)$, we conclude $f(m_S)=f(1)$, and
  $f(x)$ is constant on $[1,m_S]_{\ZZ}$ by monotonicity. Notice that
  $f(q-1)=f(q-m_S)$ since $q-m_S \equiv (q-1)m_S \pmod q$. If $q-m_S
  \leq m_S$, i.e., $m_S> (q-1)/2$, then $f(q-m_S)=f(1)$, thus
  $f(q-1)=f(q-m_S)=f(1)$. By monotonicity again, we conclude that
  $f(x)$ is constant.

  So furthermore, assume that $2m_S \leq q-1$. In particular, $[q/k]
  \geq [q/m_S] \geq 2$.  Let $c=[q/k]$ if $p \nmid [q/k]$, and
  $c=[q/k]-1$ otherwise. Clearly, $q-2k< ck < q$. By construction, $c
  \in [1, m_S]_{\ZZ}$, which implies that $f(c)=f(1)$. Therefore,
  $f(ck)=f(c)=f(1)$ and $f(x)$ is constant on $[1,ck]_{\ZZ}$. On
  the other hand,
  \[ ck> q-2k> q-2m_S.\] Hence $f(q-2)=f(q-2m_S)=f(1)$. This shows that
  $f$ is constant on $[1, q-2]_{\ZZ}$. Last, $f(q-1)=f(q-k)$ but clearly,
  $q-k \in [1, q-2]_{\ZZ}$.
\end{proof}

\begin{lem}\label{lem:num2}
Let $p=2$ and $q=2^r$.  Given $k\neq 1$ in $(\ZZ/q\ZZ)^{\times}$,
let $\theta_k$ be as defined in the previous lemma.  Let  $f: (\ZZ/2^r\ZZ)^{\times}
  \rightarrow \RR$ be a monotonic function on $[1,2^r]_\ZZ$.
  If $f\circ \theta_k =f$, then $f$ is constant on 
  $[1,2^{r-1}]_\ZZ$. 
\end{lem}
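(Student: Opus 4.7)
The plan is to imitate the strategy of Lemma \ref{lem:num1}: reduce to the case where $k$ has order exactly $2$ in $(\ZZ/q\ZZ)^\times$, and then inspect the handful of remaining possibilities directly.  As in that lemma, $f\circ\theta_k=f$ implies $f\circ\theta_{k^j}=f$ for every $j\geq 0$, and since $(\ZZ/2^r\ZZ)^\times$ is a $2$-group the power $k^{\ord(k)/2}$ has order exactly $2$; hence we may assume $\ord(k)=2$ from the outset.  The cases $r=1,2$ are degenerate (either no such $k$ exists, or $[1,2^{r-1}]_\ZZ=\{1\}$ and the claim is vacuous), so I would assume $r\geq 3$.

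For $r\geq 3$ the group $(\ZZ/2^r\ZZ)^\times\cong \ZZ/2\ZZ\times\ZZ/2^{r-2}\ZZ$ contains exactly three elements of order $2$, namely $2^r-1$, $2^{r-1}-1$, and $2^{r-1}+1$, and I would dispatch each of them in turn.  For each such $k$ the strategy is the same: compute $\theta_k(j)$ for a well-chosen small odd $j$ to obtain a single equality $f(a)=f(b)$ with $a<b$, and then invoke monotonicity of $f$ on $[1,2^r]_\ZZ$ to conclude that $f$ is constant on $[a,b]_\ZZ$.  Concretely, $k=2^r-1$ yields $f(1)=f(2^r-1)$ and forces $f$ constant on all of $[1,2^r]_\ZZ$; $k=2^{r-1}-1$ yields $f(1)=f(2^{r-1}-1)$ and forces constancy on $[1,2^{r-1}-1]_\ZZ=[1,2^{r-1}]_\ZZ$; and $k=2^{r-1}+1$ satisfies $jk\equiv j+2^{r-1}\pmod{2^r}$ for every odd $j$ (because $j\cdot 2^{r-1}\equiv 2^{r-1}\pmod{2^r}$), so taking $j=1$ gives $f(1)=f(2^{r-1}+1)$ and forces constancy on $[1,2^{r-1}+1]_\ZZ\supseteq [1,2^{r-1}]_\ZZ$.

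I do not anticipate a serious obstacle: unlike the odd-$p$ situation, where the cyclicity of $(\ZZ/q\ZZ)^\times$ had to be leveraged to reduce to prime-order $k$, here the reduction to order $2$ is essentially automatic and the case analysis is short.  The mild conceptual point, which explains why the conclusion cannot be strengthened to ``$f$ is constant'' as in Lemma \ref{lem:num1}, is that when $k=2^{r-1}-1$ the orbits of $\theta_k$ on odd integers split cleanly between the halves $[1,2^{r-1}]_\ZZ$ and $[2^{r-1},2^r]_\ZZ$, so the hypothesis offers no way to transfer information about $f$ between the two halves.  This is precisely why the statement of the lemma is restricted to the first half.
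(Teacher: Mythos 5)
Your proposal is correct and follows essentially the same route as the paper: both arguments reduce to an element of order $2$ in the $2$-group $(\ZZ/2^r\ZZ)^\times$ lying in $\langle k\rangle$, observe that each of the three such elements $2^{r-1}-1$, $2^{r-1}+1$, $2^r-1$ yields $f(1)=f(x_k)$ with $x_k\geq 2^{r-1}-1$, and conclude by monotonicity. The paper handles the three cases uniformly via the single inequality $x_k\geq 2^{r-1}-1$ rather than case by case, but the content is identical.
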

\begin{proof}
  For $r=1, 2$, the lemma is trivial since $[1,2^{r-1}]_\ZZ$ consists of only one
  element. For $r \geq 3$, $(\ZZ/2^r\ZZ)^{\times} \cong \ZZ/2\ZZ\times
  \ZZ/2^{r-2}\ZZ$. The order of any nontrivial element in
  $(\ZZ/2^r\ZZ)^{\times}$ is a power of $2$. So the cyclic group
  $\langle k \rangle$ contains one of the $3$ elements of order two:
  $2^{r-1}-1, 2^{r-1}+1, 2^{r}-1$. Call it $x_k$. It follows from the
  identity $f\circ \theta_k =f$ that $f(1)=f(x_k)$. The lemma now
  follows from monotonicity of $f$ since $x_k\geq 2^{r-1}-1$.
\end{proof}

\begin{cor}\label{cor:center}
  If $E=\QQ(\zeta_q)$ contains $\mathfrak{C}_{J^{(f,q)}}$, then $
  \mathfrak{C}_{J^{(f,q)}}=\QQ(\zeta_q)$. 
\end{cor}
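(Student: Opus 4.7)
The plan is to argue by contradiction via Theorem \ref{thm:center}. Suppose $\mathfrak{C}_{J^{(f,q)}} \subsetneq \ii(\QQ(\zeta_q))$. Since $\QQ(\zeta_q)/\QQ$ is Galois, Theorem \ref{thm:center} applied with $X = J^{(f,q)}$ and $E = \QQ(\zeta_q)$ produces a nontrivial $\kappa \in \Gal(\QQ(\zeta_q)/\QQ)$ with $n_\tau(J^{(f,q)}, E) = n_{\tau \kappa}(J^{(f,q)}, E)$ for every embedding $\tau : E \hookrightarrow K$.

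Next, I would translate this identity into a combinatorial constraint on the integers $n_i$. Under the identification $\Gal(\QQ(\zeta_q)/\QQ) \cong (\ZZ/q\ZZ)^\times$ sending $\sigma_k \mapsto (\zeta_q \mapsto \zeta_q^k)$, write $\kappa = \sigma_k$ with $k \neq 1$. A direct check with the embeddings $\tau_i: \zeta_q \mapsto \zeta_q^{-i}$ gives $\tau_i \circ \kappa = \tau_{ik}$, so Theorem \ref{thm:center} becomes $n_i = n_{ik}$ for every $i \in (\ZZ/q\ZZ)^\times$, where $n_i = [mi/p^{r-s}]_S$ by Proposition \ref{prop:diff}(ii). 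Setting $f(i) = n_i$, the function $f$ is monotonic on $[1,q]_\ZZ$ because $[\cdot]_S$ is nondecreasing in its real argument, and the above identity reads $f \circ \theta_k = f$.

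For $p$ odd, Lemma \ref{lem:num1} then forces $f$ to be constant on $[1,q]_\ZZ$; but Remark \ref{rem:gcd} yields $n_{p^{r-s}+1} - n_1 = k_0 p^{r-s} + c > 0$ (writing $m = k_0 p^{r-s} + c$, $0 < c < p^{r-s}$), a contradiction. For $p = 2$ the conclusion of Lemma \ref{lem:num2} is only constancy on $[1, 2^{r-1}]_\ZZ$, but when $s \geq 2$ we still have $2^{r-s} + 1 \leq 2^{r-2}+1 \leq 2^{r-1}$, so the same pair of witnesses from Remark \ref{rem:gcd} lies inside this range and once again contradicts constancy.

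The hard part will be the remaining sub-case $p = 2$, $s = 1$, where the natural witnesses $p^{r-s} \pm 1 = 2^{r-1} \pm 1$ straddle the endpoint of the interval furnished by Lemma \ref{lem:num2}, and Remark \ref{rem:gcd} cannot be invoked as-is. I would handle this by cases in $r$: when $r = 2$, Lemma \ref{lem:num2} is vacuous, but the only nontrivial element of $(\ZZ/4\ZZ)^\times$ is $k = 3$, and a direct computation of $n_1$ and $n_3$ for odd $m$ shows $n_1 \neq n_3$; when $r \geq 3$, I use the odd witness $2^{r-1} - 1 < 2^{r-1}$ together with the identity $n_{2^{r-1}-1} = m - n_1 - 1$, which cannot equal $n_1$ once $m \geq 3$ (which holds because $n = 2m \geq 5$). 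In every sub-case the assumption $\mathfrak{C}_{J^{(f,q)}} \subsetneq \ii(\QQ(\zeta_q))$ leads to a contradiction, giving the corollary.
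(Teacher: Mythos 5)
Your proposal is correct and follows essentially the same route as the paper: argue by contradiction via Theorem \ref{thm:center}, translate the conclusion into the identity $n_i=n_{ik}$ for the nondecreasing function $i\mapsto n_i$, invoke Lemmas \ref{lem:num1} and \ref{lem:num2}, and contradict the explicit formula $n_i=[mi/p^{r-s}]_S$ from Proposition \ref{prop:diff}(ii). The only difference is that you spell out the final numerical contradictions (especially the $p=2$, $s=1$ sub-case via $n_{2^{r-1}-1}=m-n_1-1$) in more detail than the paper, which simply asserts the contradiction with Proposition \ref{prop:diff}(ii).
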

\begin{proof}
  If $q=2$, then $E=\QQ(\zeta_2)=\QQ$. Since
  $\mathfrak{C}_{J^{(f,q)}}$ is a subfield of $E$, we see that
  $\mathfrak{C}_{J^{(f,q)}}=\QQ=E$. Further assume that $q
  >2$. Suppose that $\mathfrak{C}_{J^{(f,q)}} \neq E$. It follows from
  Theorem \ref{thm:center} that there exists an nontrivial field
  automorphism $\kappa: \QQ(\zeta_q) \to \QQ(\zeta_q)$ such that
  $n_\tau(X,E)=n_{\tau\kappa}(X, E)$ for all embeddings $\tau: E
  \hookrightarrow K$. Clearly $\kappa(\zeta_q)=\zeta_q^k$ for some $k$
  in $[1,q]_\ZZ$. It follows that if we define $f:
  (\ZZ/q\ZZ)^\times \to \RR$ where $ i\mapsto n_i$, then $f(ki)=f(i)$
  for all $i \in (\ZZ/q\ZZ)^\times$. Notice that $f$ is a
  nondecreasing function on the set of representatives. Recall that $n
  \geq 5$. If $p$ is odd, by Lemma \ref{lem:num1}, $f$ is constant.
  If $q=4$, then $k=3$. Thus $n_1=n_3$. If $q=2^r >4$, by Lemma
  \ref{lem:num2}, $f(1)=f(2^{r-1}-1)$. Either way, it contradicts
  part (ii) of Lemma \ref{prop:diff}.
\end{proof}

\begin{prop}\label{prop:p1}
  Let $K$ be a field of characteristic zero. Let $p$ be prime, $q=p^r$
  be a power of $p$, and $n=mp^s\geq 5$ with $0<s<r$ and $p\nmid
  m$. Suppose that $\Gal(f)$ is either $\mathbf{S}_n$ or
  $\mathbf{A}_n$. Then
  \[\End^0(J^{(f,q)})\cong \QQ(\zeta_q),
  \qquad \End(J^{(f,q)})\cong\ZZ[\zeta_q]\]
\end{prop}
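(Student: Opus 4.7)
The plan is to show that the centralizer of $\ii(\O)$ in $\End(J^{(f,q)})$ equals $\ii(\O)$ itself; once this is in hand, Corollary~\ref{cor:center} will pin down the center of $\End^0(J^{(f,q)})$ as $\ii(E)$, and the result falls out. Set $R := \End_{\ii(\O)}(J^{(f,q)}) = \{\alpha \in \End(J^{(f,q)}) : \alpha\ii(c) = \ii(c)\alpha \text{ for all } c \in \O\}$. Because $\ii(\O) = \ZZ[\zeta_q]$ is a Dedekind domain, $R$ is a finitely generated projective $\ii(\O)$-module of some rank $r$, with $r \geq 1$ since $\ii(\O) \subseteq R$. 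I would analyze the natural $\ii(\O)$-linear reduction map
$$\varepsilon \colon R \longrightarrow \End_{\FF_p}(J^{(f,q)}_\lambda), \qquad \alpha \longmapsto \alpha|_{J^{(f,q)}_\lambda},$$
which is well defined since every $\alpha \in R$ commutes with $\ii(1-\zeta_q)$ and hence preserves $J^{(f,q)}_\lambda = \ker \ii(1-\zeta_q)$.

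For $\ker \varepsilon$: if $\alpha \in R$ kills $\ker \ii(1-\zeta_q)$, then since $\ii(1-\zeta_q)$ is an isogeny inducing an isomorphism $J^{(f,q)}/J^{(f,q)}_\lambda \xrightarrow{\sim} J^{(f,q)}$, one may factor $\alpha = \ii(1-\zeta_q)\alpha'$ for a unique $\alpha' \in \End(J^{(f,q)})$; cancelling the isogeny $\ii(1-\zeta_q)$ in $\End^0(J^{(f,q)})$ against the identity $\alpha\ii(c) = \ii(c)\alpha$ shows $\alpha'$ again commutes with $\ii(\O)$, whence $\ker \varepsilon = \lambda R$. For $\mathrm{im}\,\varepsilon$: the $\Gal(\bar{K}/K)$-action on $J^{(f,q)}_\lambda$ factors through $\Gal(f)$ by Proposition~\ref{prop:dim}, and $\End_{\Gal(f)}(J^{(f,q)}_\lambda) = \FF_p \cdot \Id$ by Lemma~\ref{lem:s2las}; combining these with the $\Gal(\bar{K}/K)$-stability of $R$, one concludes $\mathrm{im}\,\varepsilon \subseteq \FF_p \cdot \Id$. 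Hence $R/\lambda R$ embeds into $\FF_p$, so $r = \dim_{\FF_p} R/\lambda R \leq 1$, forcing $r = 1$ and $R = \ii(\O)$ (using maximality of $\ZZ[\zeta_q]$ as an order in $\QQ(\zeta_q)$).

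Passing to $\QQ$-coefficients, the centralizer of $\ii(E)$ in $\End^0(J^{(f,q)})$ coincides with $\ii(E) = \QQ(\zeta_q)$. The center $\mathfrak{C}_{J^{(f,q)}}$ lies in this centralizer (as it commutes with $\ii(E)$), so $\mathfrak{C}_{J^{(f,q)}} \subseteq \ii(E)$; Corollary~\ref{cor:center} then gives $\mathfrak{C}_{J^{(f,q)}} = \ii(E)$. But if $\ii(E)$ is the whole center, every element of $\End^0(J^{(f,q)})$ commutes with $\ii(E)$, so $\End^0(J^{(f,q)})$ equals its own centralizer of $\ii(E)$, namely $\ii(E) = \QQ(\zeta_q)$. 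The integral equality $\End(J^{(f,q)}) = \ZZ[\zeta_q]$ then follows because $\End(J^{(f,q)})$ is an order in $\QQ(\zeta_q)$ containing the maximal order $\ZZ[\zeta_q]$. The main technical obstacle is the bound $\mathrm{im}\,\varepsilon \subseteq \FF_p \cdot \Id$: pointwise commutation of $\varepsilon(\alpha)$ with the Galois action is automatic only when $\alpha$ is $K$-rational, so to handle general $\alpha \in R$ one should first carry out the argument over $K$ to establish $\End_{K,\ii(\O)}(J^{(f,q)}) = \ii(\O)$, then upgrade to the absolute statement by exploiting the $\Gal(\bar{K}/K)$-stability of $R$ together with the simplicity and nonabelianness of $\Gal(K(\rf)/K) = \mathbf{A}_n$ (Remark~\ref{s1:field}).
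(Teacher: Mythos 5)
Your overall skeleton agrees with the paper's: first identify the centralizer of $\ii(\QQ(\zeta_q))$ in $\End^0(J^{(f,q)})$ with $\ii(\QQ(\zeta_q))$ itself, then use Corollary \ref{cor:center} to show this is the center, conclude $\End^0(J^{(f,q)})=\ii(\QQ(\zeta_q))$, and get the integral statement from maximality of $\ZZ[\zeta_q]$. The second half of your argument is exactly the paper's. The problem is the first half. Your reduction map $\varepsilon$ and the identification $\ker\varepsilon=\lambda R$ are fine, and your argument does correctly prove the \emph{$K$-rational} statement $\End_K(J^{(f,q)},\ii)=\ii(\O)$, since for $K$-rational $\alpha$ the restriction $\varepsilon(\alpha)$ lands in $\End_{\Gal(f)}(J^{(f,q)}_\lambda)=\FF_p\cdot\Id$. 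But the proposition needs the \emph{absolute} centralizer, and your proposed upgrade --- ``Galois stability of $R$ plus simplicity of $\mathbf{A}_n$'' --- is not a proof. The Galois action on $\End^0(J^{(f,q)})$ factors through $\Gal(L/K)$, where $L$ is the field of definition of all endomorphisms; this group has no reason to be $\mathbf{A}_n$ or even related to $\Gal(K(\rf)/K)$. Knowing that the fixed subalgebra of this action is $\ii(E)$ does not force the whole algebra to be $\ii(E)$: a noncommutative division algebra over $E$ on which a finite group acts by inner automorphisms with trivial centralizer of the image is a perfectly consistent scenario that your argument does not exclude.

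Nor can you rescue the argument purely at the level of $J^{(f,q)}_\lambda$: what Galois stability of $R$ gives you is that the image $\varepsilon(R)$ is a subalgebra of $\End_{\FF_p}(J^{(f,q)}_\lambda)$ \emph{normalized} by the image of $\Gal(f)$, not centralized by it. Since $p\mid n$, the module $V_{\rf}\cong J^{(f,q)}_\lambda$ is not even simple (it contains the codimension-one submodule $(\FF_p^{\rf})^{00}$), so there exist proper normalized subalgebras strictly larger than $\FF_p\cdot\Id$, and Lemma \ref{lem:s2las} alone cannot bound $\varepsilon(R)$. This is precisely why the paper invokes \cite[Theorem 3.12(ii)(2)]{zarhin07}, whose hypotheses require, in addition to Lemma \ref{lem:s2las}, the condition $d(n,q)=1$ on the multiplicities $n_i$ from Remark \ref{rem:gcd} (established via the Lie algebra/differential-form computation of Proposition \ref{prop:diff}). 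Your proposal never uses Remark \ref{rem:gcd} at all, which is the telltale sign that an essential ingredient is missing. To complete the proof you would either have to reprove that theorem of Zarhin or cite it, feeding in both Lemma \ref{lem:s2las} and Remark \ref{rem:gcd}.
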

\begin{proof}
  By remark \ref{s1:field}, we may assume that $\zeta_q\in K$ and
  $\Gal(f)=\mathbf{A}_n$. Since $n\geq 5$, the group $\Gal(f)$
  contains no non-trivial normal subgroups.  We write
  $\End^0(J^{(f,q)},\ii)$ for the centralizer of $\ii(Q(\zeta_q))$
  inside $\End^0(J^{(f,q)})$. Applying \cite[Theorem
  3.12(ii)(2)]{zarhin07},  and combining with remark \ref{rem:gcd} and Lemma
  \ref{lem:s2las}, we get
  \begin{gather*}
\End^0(J^{(f,q)},\ii)=\ii(\QQ(\zeta_q))\cong \QQ(\zeta_q),    \\
\End(J^{(f,q)},\ii)=\ii(\O)\cong \O.
  \end{gather*} In particular, $\QQ(\zeta_q)$
  contains the center of $\End^0(J^{(f,q)})$. If follows from
  Corollary \ref{cor:center} that $\ii(\QQ(\zeta_q))$ coincides with
  the center of $\End^0(J^{(f,q)})$. Thus
  \[\End^0(J^{(f,q)})=\End^0(J^{(f,q)},\ii)=\ii(\QQ(\zeta_q))\]
  If $\dq\vert_{J^{(f,q)}}$ is the restriction of $\dq$ to $J^{(f,q)}$,
  viewed as an automorphism of $J^{(f,q)}$, then
  $\ii(\O)\cong\ZZ[\dq\vert_{J^{(f,q)}}]$ is the maximal order in
  $\QQ(\zeta_q)$ and $\ZZ[\dq\vert_{J^{(f,q)}}]
  \subseteq \End(J^{(f,q)})$, and we conclude that
  $\ZZ[\dq\vert_{J^{(f,q)}}]=\End(J^{(f,q)})$.
\end{proof}

  \begin{proof}[Proof of Theorem \ref{thm:t1}]
    We need to consider only the case $s>0$ (see Remark
    \ref{s1:proof}).  By Corollary \ref{cor:iso}, it suffices to show
    $\End^0(J^{(f,p^i)}) \cong \QQ(\zeta_{p^i})$ for all $1\leq i\leq
    r$, which follows from Proposition \ref{prop:p1} and Remark
    \ref{rem:s3las}.
  \end{proof}

\end{document}